\newcommand{\al}{\alpha}
\newcommand{\la}{\lambda}
\newcommand{\de}{\delta}
\newcommand{\bx}{\bar x}
\newcommand{\iv}{^{-1} }
\newcommand {\R} {\mathbb R}
\newcommand {\N} {\mathbb N}
\newcommand {\B} {\mathbb B}
\newcommand {\Sp} {\mathbb S}
\newcommand {\bd} {{\rm bd}\,}
\newcommand {\Limsup} {\mathop{{\rm Lim\,sup}\,}}
\newcommand {\sd} {\partial}
\newcommand {\Int} {{\rm int}\,}
\newcommand{\TO}[1]{\stackrel{#1}{\to}}
\newcommand{\norm}[1]{\left\Vert#1\right\Vert}
\newcommand{\abs}[1]{\left\vert#1\right\vert}
\newcommand{\set}[1]{\left\{#1\right\}}
\newcommand{\ang}[1]{\left\langle #1 \right\rangle}
\newcounter{mycount}
\newtheorem{theorem}{Theorem}
\newtheorem{proposition}[theorem]{Proposition}
\newtheorem{lemma}[theorem]{Lemma}
\newtheorem{corollary.pr}{Corollary}
\theoremstyle{definition}
\newtheorem{definition}[theorem]{Definition}
\theoremstyle{remark}
\newtheorem{remark}[theorem]{Remark}
\newtheorem{example}[theorem]{Example}
\theoremstyle{plain}
\DeclareMathOperator{\prox}{prox}
\DeclareMathOperator{\con}{cone}
\DeclareMathOperator{\gr}{gph}
\def\smartqedtr{\def\qedtr{\ifmmode\triangle\else{\unskip\nobreak\hfil
\penalty50\hskip1em\null\nobreak\hfil$\triangle$
\parfillskip=0pt\finalhyphendemerits=0\endgraf}\fi}}
\title{Regularity of collections of sets and convergence of inexact alternating projections}
\author{Alexander Y. Kruger\\
\small
\emph{Centre for Informatics and Applied Optimization}\\
\small
\emph{Faculty of Science and Technology,
Federation University Australia}\\
\small
\emph{POB 663, Ballarat, Vic, 3350, Australia}\\
\small
\emph{a.kruger@federation.edu.au}
\and Nguyen H. Thao\\
\small
\emph{Centre for Informatics and Applied Optimization}\\
\small
\emph{Faculty of Science and Technology,
Federation University Australia}\\
\small
\emph{POB 663, Ballarat, Vic, 3350, Australia}\\
\small
\emph{hieuthaonguyen@students.federation.edu.au; nhthao@ctu.edu.vn}
}
\date{Dedicated to the memory of Jean Jacques Moreau}
\begin{document}
\maketitle

\begin{abstract}
We study the usage of regularity properties of collections of sets in convergence analysis of alternating projection methods for solving feasibility problems.
Several equivalent characterizations of these properties are provided.
Two settings of inexact alternating projections are considered and the corresponding convergence estimates are established and discussed.
\end{abstract}

\emph{Keywords}: Alternating projections, uniform regularity, normal cone, subdifferential
\medskip

\emph{2000 Mathematics Subject Classification}: 49J53, 49K27, 58E30

\section{Introduction}
In this article we study the usage of regularity properties of collections of sets in convergence analysis of alternating projection methods for solving \emph{feasibility problems}, i.e., finding a point in the intersection of several sets.

Given a set $A$ and a point $x$ in a metric space, the (metric) \emph{projection} of $x$ on $A$ is defined as follows:
\begin{gather*}
P_{A}(x):= \left\{a\in A \mid d(x,a)=d(x,A)\right\},
\end{gather*}
where $d(x,A):= \inf_{a\in A}d(x,a)$ is the distance from $x$ to $A$.
If $A$ is a closed subset of a finite dimensional space, then $P_{A}(x)\ne\emptyset$.
If $A$ is a closed convex subset of a Euclidean space, then $P_{A}(x)$ is a singleton.

Given a collection $\{A,B\}$ of two subsets of a metric space, we can talk about \emph{alternating projections}.

\begin{definition}[Alternating projections]\label{AP}
$\{x_n\}$ is a sequence of alternating projections for $\{A,B\}$ if
\begin{gather*}
x_{2n+1}\in P_B(x_{2n})
\quad\mbox{and}\quad
x_{2n+2}\in P_A(x_{2n+1})
\quad(n=0,1,\ldots).
\end{gather*}
\end{definition}

Investigations of convergence of the alternating projections to a point in the intersection of closed sets in the setting of a Hilbert space, or more often a finite dimensional Euclidean space, have long history which can be traced back to von Neumann; see the historical comments in \cite{LewLukMal09,DruIofLew14,NolRon}.
In the convex case, the key convergence estimates were established by Bregman \cite{Bre65} and Bauschke \& Borwein \cite{BauBor93}.
In the nonconvex case, in the finite dimensional setting, linear convergence of the method was shown by Lewis et al. \cite[Theorem~5.16]{LewLukMal09} under the assumptions of the \emph{uniform regularity} of the collection $\{A,B\}$ and \emph{super-regularity} of one of the sets; see the definitions and characterizations of these properties in Section~\ref{Prel}.

Throughout this paper, we assume that $A$ and $B$ are closed.

\begin{theorem}
[Linear convergence of alternating projections]\label{LLM}
Let $X$ be a finite dimensional Euclidean space.
Suppose that
\begin{enumerate}
\item
$\{A,B\}$ is uniformly regular at $\bx\in A\cap B$, i.e.,
\begin{align}\label{barc}
\bar{c}&:=
\sup \left\{-\ang{u,v}\mid u\in \overline{N}_{A}(\bx)\cap\overline{\B},\; v\in\overline{N}_{B}(\bx)\cap\overline{\B}\right\}<1;
\end{align}
\item
$A$ is super-regular at $\bx$.
\end{enumerate}
Then, for any $c\in(\bar{c},1)$, a sequence of alternating projections for $\{A,B\}$ with initial point sufficiently close to $\bx$ converges to a point in $A\cap B$ with $R-$linear rate $\sqrt{c}$.
\end{theorem}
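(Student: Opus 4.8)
The plan is to follow the scheme of \cite{LewLukMal09}: first convert hypotheses~(i) and (ii) into \emph{uniform} angle inequalities valid on a small ball about $\bx$, then use them to show that the consecutive gaps $d_k:=\norm{x_k-x_{k+1}}$ contract by a factor $c$ over every two iterations, and finally deduce that $\{x_n\}$ is Cauchy with $R$-linear rate $\sqrt c$ and that its limit lies in $A\cap B$. The reasoning must work for an arbitrary selection of the (possibly set-valued) projections.

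First I would localise the hypotheses. Fix $c\in(\bar c,1)$ and pick $c_1\in(\bar c,c)$. There is $\delta_1>0$ such that
\[
-\ang{u,v}\le c_1\norm{u}\,\norm{v}
\]
whenever $a\in A$ and $b\in B$ lie within $\delta_1$ of $\bx$, $u\in N_A(a)$ and $v\in N_B(b)$; this follows from \eqref{barc}, because in the finite dimensional space $X$ the limiting normal cone is the outer limit of proximal normal cones, so a failure of the estimate for every $\delta_1$ would produce, after normalising and extracting a convergent subsequence, vectors $u\in\overline{N}_A(\bx)\cap\overline{\B}$ and $v\in\overline{N}_B(\bx)\cap\overline{\B}$ with $-\ang{u,v}\ge c_1>\bar c$, contradicting the definition of $\bar c$. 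From super-regularity of $A$ I would choose $\eps>0$ with $(c_1+\eps)/(1-\eps)\le c$ and then $\delta_2>0$ such that
\[
\ang{v,x-a}\le\eps\norm{v}\,\norm{x-a}
\]
whenever $a,x\in A$ lie within $\delta_2$ of $\bx$ and $v\in N_A(a)$. Put $\delta:=\min\{\delta_1,\delta_2\}$.

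Next I would establish the two-step contraction. Suppose $n\ge1$ and that $x_{2n},x_{2n+1},x_{2n+2}$ all lie within $\delta$ of $\bx$, so that $x_{2n},x_{2n+2}\in A$ and $x_{2n+1}\in B$. By the definition of the projections, $x_{2n}-x_{2n+1}\in N_B(x_{2n+1})$ and $x_{2n+1}-x_{2n+2}\in N_A(x_{2n+2})$; the uniform-regularity estimate with $a=x_{2n+2}$, $b=x_{2n+1}$ and the super-regularity estimate with $a=x_{2n+2}$, $x=x_{2n}$ give
\[
\begin{gathered}
\ang{x_{2n+1}-x_{2n+2},\,x_{2n+1}-x_{2n}}\le c_1\,d_{2n+1}d_{2n},\\
\ang{x_{2n+1}-x_{2n+2},\,x_{2n}-x_{2n+2}}\le\eps\,d_{2n+1}\norm{x_{2n}-x_{2n+2}}.
\end{gathered}
\]
Writing $x_{2n+1}-x_{2n+2}=(x_{2n+1}-x_{2n})+(x_{2n}-x_{2n+2})$, pairing with $x_{2n+1}-x_{2n+2}$, adding the two inequalities, dividing by $d_{2n+1}$ and using $\norm{x_{2n}-x_{2n+2}}\le d_{2n}+d_{2n+1}$ give $d_{2n+1}\le\frac{c_1+\eps}{1-\eps}\,d_{2n}\le c\,d_{2n}$. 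Since a projection step cannot enlarge the distance to the set just left, $d_{k+1}\le d_k$ for $k\ge1$; hence $d_{2n+2}\le d_{2n+1}\le c\,d_{2n}\le c\,d_{2n-1}$, so both the even- and odd-indexed subsequences of $\{d_k\}$ shrink by the factor $c$ over two indices and $d_k\le M(\sqrt c)^{k}$ with $M$ proportional to $\norm{x_0-\bx}$. This is precisely where having only $A$ super-regular costs a square root: genuine contraction is obtained only on alternate steps.

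Finally I would pin down the location of the iterates and conclude. Since $\bx\in A\cap B$, $d_0=d(x_0,B)\le\norm{x_0-\bx}$ and $d_1=d(x_1,A)\le2\norm{x_0-\bx}$, so $\sum_k d_k$ is comparable to $\norm{x_0-\bx}$; a routine induction on $k$ based on $\norm{x_{k+1}-\bx}\le\norm{x_0-\bx}+\sum_{j\le k}d_j$ shows that if $\norm{x_0-\bx}$ is small enough (depending only on $\delta$ and $c$) all iterates remain within $\delta$ of $\bx$, so the previous step applies for every $n$. Then $\sum_k d_k<\infty$, $\{x_n\}$ converges to some $\hat x$ with $\norm{x_k-\hat x}\le\sum_{j\ge k}d_j\le M'(\sqrt c)^k$, and passing to the limit in $d(x_{2n+1},A)=d_{2n+1}\to0$ and $d(x_{2n},B)=d_{2n}\to0$, together with closedness of $A$ and $B$, yields $\hat x\in A\cap B$; this is the asserted $R$-linear convergence with rate $\sqrt c$. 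The main obstacle is the two-step estimate with its calibration: recognising that uniform regularity controls the angle between a proximal normal to $A$ and a proximal normal to $B$ at \emph{nearby but distinct} points, that super-regularity of $A$ alone supplies the missing one-sided inequality for the $A$-projection, and hence that contraction is available only every \emph{second} step — the reason the rate is $\sqrt c$ and not $c$ — after which choosing $c_1,\eps,\delta$ and closing the neighbourhood induction are routine.
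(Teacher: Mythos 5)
The paper states Theorem~\ref{LLM} without proof, citing \cite[Theorem~5.16]{LewLukMal09}, and your argument is a correct reconstruction of that proof which follows essentially the same scheme as the paper's own proof of the inexact generalization, Theorem~\ref{T31}: the same decomposition of $\|x_{2n+1}-x_{2n+2}\|^2$ into a term controlled by the angle characterization of uniform regularity (cf.\ \eqref{er1}) and a term controlled by super-regularity of $A$ (cf.\ \eqref{er2}), followed by monotonicity of the step lengths and the standard induction keeping all iterates in the neighbourhood where the estimates apply. Your localisation of \eqref{barc} to Fr\'echet (equivalently, proximal) normals at nearby points via the normalise-and-extract compactness argument is exactly the content of Propositions~\ref{P3}(iii) and \ref{P4}(ii), so nothing essential is missing.
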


$\overline{N}_{A}(\bx)$ and $\overline{N}_{B}(\bx)$ in \eqref{barc} stand for the \emph{limiting normal cones} to the corresponding sets at $\bx$; see definition \eqref{lFr} below.

Observe that $-\ang{u,v}$ in \eqref{barc} can be interpreted as the cosine of the angle between the cones $\overline{N}_{A}(\bx)$ and $-\overline{N}_{B}(\bx)$.

The role of the regularity (transversality-like) property (i) of $\{A,B\}$ and convexity-like property (ii) of $A$ in the convergence proof is analysed in Drusvyatskiy et al. \cite{DruIofLew14} and Noll \& Rondepierre \cite{NolRon}.
It has well been recognized that the uniform regularity assumption is far from being necessary for the linear convergence of alternating projections.
For example, as observed in \cite{DruIofLew14}, it fails when the affine span of $A\cup B$ is not equal to the whole space.

The drawback of the uniform regularity property as defined by \eqref{barc} from the point of view of the alternating projections is the fact that it takes into account all (limiting) normals to each of the sets while in many situations (like the one in the above example) some normals are irrelevant to the idea of projections.

Recently, there have been several successful attempts to relax the discussed above uniform regularity property by restricting the set of involved (normal) directions to only those relevant for characterizing alternating projections.
All the newly introduced regularity properties still possess some uniformity in the sense that they take into account directions originated from points in a neighbourhood of the reference point and some estimate is required to hold uniformly over all such directions.

Bauschke et al. \cite{BauLukPhaWan13.1,BauLukPhaWan13.2} suggested restricting the set of normals participating in \eqref{barc} by replacing $\overline{N}_{A}(\bx)$ and $\overline{N}_{B}(\bx)$ with \emph{restricted limiting normal cones} $\overline{N}_{A}^B(\bx)$ and $\overline{N}_{B}^A(\bx)$ depending on both sets and attuned to the method of alternating projections.
For example, the cone $\overline{N}_{A}^B(\bx)$ consists of limits of sequences of the type $t_k(b_k-a_k)$ where $t_k>0$, $b_k\in B$, $a_k$ is a projection of $b_k$ on $A$, and $a_k\to\bx$; cf. definitions \eqref{Bprox} and \eqref{BlFr}.
Bauschke et al. also adjusted (weakened) the notion of super-regularity accordingly (by considering \emph{joint super-restricted regularity} taking into account the other set) and, under these weaker assumptions, arrived at the same conclusion as in Theorem~\ref{LLM}; cf. \cite[Theorem~3.14]{BauLukPhaWan13.1} and Theorem~\ref{BLPW-T} below.

The idea of Bauschke et al. has been further refined by Drusvyatskiy et al. \cite[Definition~4.4]{DruIofLew14} who observed that it is sufficient to consider only sequences $t_k(b_k-a_k)$ as above with $b_k\to\bx$; cf. Definition~\ref{BLPW2} below.
In this case, $a_k\to\bx$ automatically.

In \cite{DruIofLew14}, the authors suggested also another way of weakening the uniform regularity condition \eqref{barc}.
Instead of measuring the angles between (usual or restricted in some sense) normals (and negative normals) to the sets, they measure the angles between vectors of the type $a-b$ with $a\in A$ and $b\in B$ and each of the cones $N_B^{\prox}(b)$ and $-N_A^{\prox}(a)$.
At least one of the angles must be sufficiently large when $a$ and $b$ are sufficiently close to $\bx$; cf. \cite[Definition~2.2]{DruIofLew14}.
Assuming this property and using a different technique, Drusvyatskiy et al. produced a significant advancement in convergence analysis of projection algorithms by establishing (see \cite[Corollary~4.2]{DruIofLew14}) $R-$linear convergence of alternating projections without the assumption of super-regularity of one of the sets (and with a slightly different convergence estimate).
The idea is closely related to the more general approach, where the feasibility problem is reformulated as a problem of minimizing a coupling function, and the property introduced in \cite{DruIofLew14} is sufficient for the coupling function to satisfy the Kurdyka-{\L}ojasiewicz inequality \cite[Proposition 4.1]{AttBolRedSou10}.

The two relaxed regularity properties introduced in \cite{DruIofLew14} are in general independent; cf. Examples~\ref{inh_not_int} and \ref{int_not_inh}.

The next step has been made by Noll and Rondepierre \cite[Definition~1]{NolRon}.
They noticed that, when dealing with alternating projections, the main building block of the method consists of two successive projections:
\begin{equation}\label{a_b}
a_1\in A,\quad b\in P_B(a_1)\quad\mbox{and}\quad a_2\in P_A(b)
\end{equation}
and it is sufficient to consider only the (proximal) normal directions determined by $a_1-b$ and $b-a_2$ for all $a_1,b,a_2$ in a neighbourhood of the reference point satisfying \eqref{a_b}.
In fact, in \cite{NolRon}, a more general setting is studied which allows for nonlinear convergence estimates under more subtle nonlinear regularity assumptions.


Another important advancement in this area is considering in \cite{LewLukMal09} of \emph{inexact} alternating projections.
Arguing that finding an exact projection of a point on a closed set is in general a difficult problem by itself, Lewis et al. relaxed the requirements to the sequence $\{x_n\}$ in Definition~\ref{AP} by allowing the points belonging to one of the sets to be ``almost'' projections.
Assuming that the other set is super-regular at the reference point, they established in \cite[Theorem~6.1]{LewLukMal09} an inexact version of Theorem~\ref{LLM}.

In the next section, we discuss and compare the uniform regularity property of collections of sets and its relaxations mentioned above.
Several equivalent characterizations of these properties are provided in a uniform way simplifying the comparison.

The terminology employed in \cite{LewLukMal09, BauLukPhaWan13.2,BauLukPhaWan13.1, DruIofLew14,NolRon} for various regularity properties is not always consistent.
We have not found a better way of handling the situation, but to use the terms \emph{BLPW-restricted regularity}, \emph{DIL-res\-tricted regularity}, and \emph{NR-restricted regularity} for the properties introduced in Bauschke, Luke, Phan, and Wang \cite{BauLukPhaWan13.2,BauLukPhaWan13.1}, Drusvyatskiy, Ioffe and Lewis \cite{DruIofLew14}, and Noll and Rondepierre \cite{NolRon}, respectively.
The refined version of \emph{BLPW-restricted regularity} due to Drusvyatskiy et al. \cite{DruIofLew14} is referred to in this article as \emph{BLPW-DIL-restricted regularity}.

In Section~\ref{S3}, we study two settings of inexact alternating projections under the assumptions of DIL-restricted regularity and uniform regularity, respectively, and establish and discuss the corresponding convergence estimates.

Our basic notation is standard; cf. \cite{Mor06.1,RocWet98, DonRoc09}.
For a normed linear space $X$, its topological dual is denoted $X^*$ while $\langle\cdot,\cdot\rangle$ denotes the bilinear form defining the pairing between the two spaces.
If $X$ is a Hilbert space, $X^*$ is identified with $X$ while $\langle\cdot,\cdot\rangle$ denotes the scalar product.
If $\dim X<\infty$, then $X$ is usually assumed equipped with the Euclidean norm.
The open and closed unit balls and the unit sphere in a normed space are denoted $\B$, $\overline{\B}$ and $\mathbb{S}$, respectively.
$\B_\delta(x)$
stands for the open
ball with radius $\delta>0$ and center $x$.
We use the convention $\B_0(x)=\{x\}$.

\section{Uniform regularity and related regularity properties}\label{Prel}

In this section, we discuss and compare the uniform regularity property of collections of sets and its several relaxations which are used in convergence analysis of projection methods.

\subsection{Uniform regularity}
The uniform regularity property has been studied in \cite{Kru06.1,Kru09.1,KruLop12.1,KruTha13.1,KruTha15}.
Below we consider the case of a collection $\{A,B\}$ of two nonempty closed subsets of a normed linear space.

\begin{definition}\label{UR}
Suppose $X$ is a normed linear space.
The collection $\{A,B\}$ is uniformly regular at $\bx\in A\cap B$ if there exist positive numbers $\alpha$ and $\delta$ such that
\begin{equation*}
(A-a-x) \bigcap(B-b-y) \bigcap(\rho\B )\neq \emptyset
\end{equation*}
for all $\rho \in (0,\delta)$, $a\in A\cap \B_{\delta}(\bx)$, $b\in B\cap \B_{\delta}(\bx)$, and $x,y\in (\alpha\rho)\B$.
\end{definition}

The supremum of all $\alpha$ in Definition~\ref{UR} is denoted $\hat{\theta}[A,B](\bx)$ and provides a quantitative characterization of the uniformly regularity property, the latter one being equivalent to the inequality $\hat\theta[A,B](\bx)>0$.
It is easy to check from the definition that
\begin{gather}\label{theta}
\hat\theta[A,B](\bx)= \liminf_{\substack{a\TO{A}\bx,b\TO{B}\bx, \rho\downarrow0}} \dfrac{\theta_{\rho}[A-a,B-b](0)}{\rho},
\end{gather}
where
\begin{gather*}
\theta_{\rho}[A,B](\bx):=\sup\left\{r\ge 0\mid (A-x)\bigcap (B-y) \bigcap \B_\rho(\bx)\neq\emptyset,\; \forall x,y\in r\B \right\}
\end{gather*}
and $a\TO{A}\bx$ means that $a\to\bx$ with $a\in A$.

The next proposition contains several characterizations of the uniform regularity property from  \cite{Kru06.1,Kru09.1,KruLop12.1,KruTha13.1, KruTha15}.
In its parts (ii) and (iii), $N_A(a)$ stands for the \emph{Fr\'echet normal cone} to $A$  at $a\in A$:
\begin{gather}\label{Fr}
N_{A}(a) := \left\{u\in X^* \mid \limsup_{x\TO{A}a} \frac {\langle u,x-a \rangle}{\|x-a\|} \leq 0 \right\}.
\end{gather}

\begin{proposition}\label{P3}
Let $A$ and $B$ be closed subsets of $X$.
\begin{enumerate}
\item
Suppose $X$ is a normed linear space.\\
\underline{Metric characterization}:
\begin{gather}\label{P3-1}
\hat{\theta}[A,B](\bx)= \liminf_{\substack{z\to\bx;\,x,y\to 0\\z\notin (A-x)\cap(B-y)}}\frac{\max\left\{d(z,A-x),d(z,B-y)\right\}} {d\left(z,(A-x)\bigcap (B-y)\right)}.
\end{gather}
$\{A,B\}$ is uniformly regular at $\bx$ if and only if there exist positive numbers $\alpha$ and $\delta$ such that
\begin{gather}\label{P3-2}
\alpha d\left(z,(A-x)\bigcap (B-y)\right)\le\max\left\{d(z,A-x),d(z,B-y)\right\}
\end{gather}
for all $z\in \B_{\delta}(\bx)$ and $x,y\in\de\B$.
\item
Suppose $X$ is an Asplund space.\\
\underline{Dual characterization}.
\begin{multline}\label{P3-3}
\hat{\theta}[A,B](\bx)= \lim_{\rho\downarrow 0}\inf \Big\{\norm{u+v}\mid u\in N_{A}(a),\;v\in N_{B}(b),\\
a\in A\cap \B_{\rho}(\bx),\;b\in B\cap \B_{\rho}(\bx),\; \norm{u}+\norm{v}=1\Big\}.
\end{multline}
$\{A,B\}$ is uniformly regular at $\bx$ if and only if there exist positive numbers $\alpha$ and $\delta$ such that
\begin{equation}\label{P3-4}
\alpha\left(\|u\|+\|v\|\right) \le \|u+v\|
\end{equation}
for all $a\in A\cap \B_{\de}(\bx)$, $b\in B\cap \B_{\de}(\bx)$, $u\in N_{A}(a)$, and $v\in N_{B}(b)$.
\item
Suppose $X$ is a Hilbert space.\\
\underline{Angle characterization}.
If either $\bx\in\bd A\cap\bd B$ or $\bx\in\Int(A\cap B)$, then
\begin{equation}\label{P3-5}
\hat{\theta}^2[A,B](\bx)=\frac{1}{2}\left(1-\hat c[A,B](\bx)\right),
\end{equation}
where
\begin{multline}\label{P3-6}
\hat c[A,B](\bx):=\lim_{\rho\downarrow0}\sup\Big\{-\langle u,v\rangle\mid u\in N_A(a)\cap\Sp,\;v\in N_B(b)\cap\Sp,\\
a\in A\cap \B_{\rho}(\bx),\;b\in B\cap \B_{\rho}(\bx)\Big\}.
\end{multline}
Otherwise, $\hat{\theta}[A,B](\bx)=1$ and $\hat c[A,B](\bx)=-\infty$.

$\{A,B\}$ is uniformly regular at $\bx$ if and only if ${\hat c[A,B](\bx)<1}$, i.e., there exist numbers $\alpha<1$ and $\delta>0$ such that
\begin{equation}\label{P3-7}
-\langle u,v\rangle<\al
\end{equation}
for all $a\in A\cap \B_{\de}(\bx)$, $b\in B\cap \B_{\de}(\bx)$, $u\in N_{A}(a)\cap\Sp$, and $v\in N_{B}(b)\cap\Sp$.
\end{enumerate}
\end{proposition}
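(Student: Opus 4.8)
These are the standard equivalent descriptions of $\hat\theta[A,B](\bx)$ drawn from the cited works, and I would prove them in the order (i)$\to$(ii)$\to$(iii), each one reformulating the previous, and all resting on \eqref{theta} — which is immediate from Definition~\ref{UR} via the substitution $r=\alpha\rho$: uniform regularity with constant $\alpha$ means precisely that $\theta_\rho[A-a,B-b](0)/\rho\ge\alpha$ for $a\in A$, $b\in B$ near $\bx$ and small $\rho$. For (i), the goal is $\hat\theta[A,B](\bx)=\mu$, where $\mu$ denotes the liminf in \eqref{P3-1}. For ``$\mu\ge\hat\theta[A,B](\bx)$'': given $z$ near $\bx$, small $x,y$ with $z\notin(A-x)\cap(B-y)$, choose approximate projections $a\in A$ of $z+x$ and $b\in B$ of $z+y$ (both near $\bx$); Definition~\ref{UR}, applied at $a,b$ with the perturbations $z+x-a$ and $z+y-b$, produces a point of $(A-x)\cap(B-y)$ within $\alpha^{-1}\max\{d(z,A-x),d(z,B-y)\}+o(1)$ of $z$, so after shrinking the parameters \eqref{P3-2} holds with every $\alpha<\hat\theta[A,B](\bx)$. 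For ``$\mu\le\hat\theta[A,B](\bx)$'': translating the first set by $a$, the second by $b$, and the base point by $\tfrac12(a+b)$ carries \eqref{P3-2} into the same estimate for $\{A-a,B-b\}$ at $0$, and evaluating it at $z=0$ (using $d(0,A-a-x)=d(x+a,A)\le\|x\|$) reproduces Definition~\ref{UR}. The ``iff'' form \eqref{P3-2} is then just the meaning of ``liminf''.

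\textbf{Part (ii).} With (i) available, this is the equivalence of the metric inequality \eqref{P3-2} with the dual inequality \eqref{P3-4}, with matching moduli, which amounts to \eqref{P3-3}; the Asplund hypothesis is used here, and only here, through the fuzzy sum rule for the Fréchet subdifferential $\partial$ (equivalently, the Extremal Principle). The easy direction of \eqref{P3-3} (the upper bound on $\hat\theta[A,B](\bx)$) needs no calculus: given $u\in N_A(a)$, $v\in N_B(b)$ near $\bx$ with $\|u\|+\|v\|=1$ and $\|u+v\|$ small, translate so that $0\in(A-a)\cap(B-b)$ with $u,v$ Fréchet normals there, and take $x=Ru/\|u\|$, $y=Rv/\|v\|$; then $\langle u,x\rangle+\langle v,y\rangle=R$, and the defining inequality of Fréchet normals forces $(A-a-x)\cap(B-b-y)\cap(\rho\B)=\emptyset$ as soon as $R\gtrsim\|u+v\|\rho$, whence $\theta_\rho[A-a,B-b](0)/\rho\lesssim\|u+v\|$. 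The reverse direction is the substantive one: arguing by contraposition, if \eqref{P3-2} fails with some $\alpha<1$ then, along bad triples $(z,x,y)$ with $z\to\bx$ and $x,y\to0$, I would apply Ekeland's variational principle to $f:=\max\{d(\cdot,A-x),d(\cdot,B-y)\}$ to reach a point $\widehat z$, still off $(A-x)\cap(B-y)$, minimizing $f+\beta\|\cdot-\widehat z\|$ with $\beta<1$ and as close to $\alpha$ as desired; at such a minimizer $d(\widehat z,A-x)=d(\widehat z,B-y)>0$ (otherwise a short step toward the farther set strictly decreases the perturbed function), so pushing $0\in\partial\bigl(f+\beta\|\cdot-\widehat z\|\bigr)(\widehat z)$ through the fuzzy sum and max rules yields nearby unit Fréchet normals $u\in N_A(a)$, $v\in N_B(b)$ together with $\tau\in[0,1]$ satisfying $\|\tau u+(1-\tau)v\|\lesssim\beta$, which — the normal cones being cones — contradicts \eqref{P3-4}. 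Keeping the constants sharp on both sides upgrades the ``iff'' to the equality \eqref{P3-3}.

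\textbf{Part (iii).} Here I would read the angle characterization off \eqref{P3-3}, available because a Hilbert space is Asplund. If $\bx\in\Int(A\cap B)$, then $N_A(a)=N_B(b)=\{0\}$ for all $a,b$ near $\bx$, the infimum in \eqref{P3-3} is over the empty set, and so $\hat\theta[A,B](\bx)=+\infty$ while $\hat c[A,B](\bx)=-\infty$. If $\bx$ is interior to exactly one of the sets, the corresponding normal cone is trivial near $\bx$, which pins the other variable to $0$ in \eqref{P3-3} and gives $\hat\theta[A,B](\bx)=1$, again with $\hat c[A,B](\bx)=-\infty$; these are the ``Otherwise'' cases, and \eqref{P3-5} reads correctly in extended arithmetic. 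When $\bx\in\bd A\cap\bd B$, for every $\rho>0$ there are points of $A$ and of $B$ in $\B_\rho(\bx)$ with nonzero proximal (hence Fréchet) normals — nearest, or approximately nearest, points of $A$ and of $B$ to suitable exterior points near $\bx$ — so the normalized sets entering $\hat c[A,B](\bx)$ are nonempty. For unit $u\in N_A(a)\cap\Sp$, $v\in N_B(b)\cap\Sp$ and $s\in[0,1]$ one has $\|su+(1-s)v\|^2=s^2+(1-s)^2+2s(1-s)\langle u,v\rangle$, minimized at $s=\tfrac12$ with value $\tfrac12\bigl(1+\langle u,v\rangle\bigr)$, so — the normal cones being cones — $\inf\bigl\{\|u+v\|^2:u\in N_A(a),\,v\in N_B(b),\,\|u\|+\|v\|=1\bigr\}=\tfrac12\bigl(1-\sup\{-\langle u,v\rangle:u\in N_A(a)\cap\Sp,\,v\in N_B(b)\cap\Sp\}\bigr)$; squaring \eqref{P3-3} and letting $\rho\downarrow0$ then yields \eqref{P3-5}, and the equivalence with $\hat c[A,B](\bx)<1$ in \eqref{P3-7} is immediate.

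\textbf{The main obstacle} is the substantive direction of part (ii): one must run Ekeland's principle so that the perturbation modulus stays strictly below $1$ while the resulting point stays off the intersection, and then push $0\in\partial(\cdot)(\widehat z)$ through the fuzzy sum/max calculus for two distance functions while controlling which of the emerging normals are of unit length — precisely the quantitative Extremal Principle in Asplund spaces, which is where the whole chain of equivalences acquires its depth. Parts (i) and (iii), and the easy half of (ii), are by comparison bookkeeping and elementary Hilbert-space geometry.
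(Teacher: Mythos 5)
Your reconstruction is sound and follows essentially the route of the sources the paper actually relies on: the paper gives no proof of Proposition~\ref{P3} but cites \cite{Kru05.1} for \eqref{P3-1}, \cite{Kru09.1} for \eqref{P3-3}, and \cite{KruTha13.1} for \eqref{P3-5}, and those proofs proceed exactly as you sketch --- translation/approximate-projection arguments for the metric characterization, Ekeland's variational principle combined with the fuzzy sum and max rules (equivalently, the extremal principle) in Asplund spaces for the dual characterization, and the elementary minimization of $\|su+(1-s)v\|^2$ over $s\in[0,1]$ in Hilbert space for the angle formula. The only loose ends are cosmetic: in (i) the clean substitution is $z=a$, $x'=x$, $y'=b-a+y$ rather than recentering at $\tfrac12(a+b)$, and in the easy half of (ii) one should choose $x\in X$ with $\|x\|=R$ nearly norming $u\in X^*$ rather than writing $Ru/\|u\|$, since $u$ lives in the dual space.
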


\begin{remark}
1. Regularity criteria \eqref{P3-2} and \eqref{P3-4} are formulated in terms of distances in the primal space and in terms of Fr\'echet normals, respectively.
This explains why we talk about, respectively, the metric and the dual characterizations in parts (i) and (ii) of Proposition~\ref{P3}.
The term ``angle characterization'' in part (iii) comes from the observation that $-\ang{u,v}$ in criterion \eqref{P3-7} can be interpreted as the cosine of the angle between the unit vectors $u$ and $-v$.

2. Constant \eqref{theta} is nonnegative while constant \eqref{P3-6} can take negative values.
It is easy to see from \eqref{P3-1} that $\hat\theta[A,B](\bx)\le1$ if $\bx\notin\Int(A\cap B)$ and $\hat\theta[A,B](\bx)=\infty$ otherwise.
Similarly, $\abs{\hat c[A,B](\bx)}\le1$ if $\bx\in\bd A\cap\bd B$ and $\hat c[A,B](\bx)=-\infty$ otherwise.

3. Unlike \cite{KruTha13.1}, we assume in \eqref{P3-1}, \eqref{P3-3}, and \eqref{P3-6} the standard conventions that the infimum and supremum of the empty set in $\R$ equal $+\infty$ and $-\infty$, respectively.
As a result, an additional assumption that either $\bx\in\bd A\cap\bd B$ or $\bx\in\Int(A\cap B)$ is needed in part (iii) to ensure equality \eqref{P3-5}.

4. Equality \eqref{P3-1} was proved in \cite[Theorem~1]{Kru05.1} while equality \eqref{P3-3} was established in \cite[Theorem~4(vi)]{Kru09.1}; see also \cite[Theorem~4]{Kru05.1} for a slightly weaker result containing inequality estimates.
Equality \eqref{P3-5} is a direct consequence of \cite[Theorem~2]{KruTha13.1}.
It can be also easily checked directly.
\end{remark}

If $\dim X<\infty$, then representations \eqref{P3-3} and \eqref{P3-6} as well as the corresponding criteria in parts (ii) and (iii) of Proposition~\ref{P3} can be simplified by using the limiting version of the Fr\'echet normal cones \eqref{Fr}.
If, additionally, $X$ is a Euclidean space, then one can also make use of proximal normals.

Recall (cf., e.g., \cite{Mor06.1}) that, in a Euclidean space, the \emph{limiting (Fr\'echet) normal cone} to $A$ at $\bx$ and the \emph{proximal normal cone} to $A$ at $a\in A$ are defined as follows:
\begin{gather}\label{lFr}
\overline{N}_{A}(\bx) :=\Limsup_{a\TO{A}\bar x} N_{A}(a)=\left\{x^*=\lim x_k^*\mid x_k^*\in N_{A}(a_k),\; a_k\TO{A}\bar x\right\},
\\\label{prox}
N_A^{\prox}(a):=\con\left(P_A^{-1}(a)-a\right)= \set{\la(x-a)\mid\la\ge0,\;a\in P_A(x)}.
\end{gather}
Their usage is justified by the following simple observations:
\begin{gather}\label{prox2}
N_A^{\prox}(a)\subset N_{A}(a)
\quad\mbox{and}\quad
\overline{N}_{A}(\bx)=\Limsup_{a\TO{A}\bar x} N_{A}^{\prox}(a).
\end{gather}

\begin{proposition}\label{P4}
Let $A$ and $B$ be closed subsets of $X$.
\begin{enumerate}
\item
Suppose $\dim X<\infty$.\\
\underline{Dual characterizations}.
\begin{align*}
\hat{\theta}[A,B](\bx)&=
\inf \left\{\norm{u+v}\mid u\in \overline{N}_{A}(\bx),\; v\in\overline{N}_{B}(\bx),\;
\norm{u}+\norm{v}=1\right\}
\\
&=\lim_{\rho\downarrow 0}\inf \Big\{\norm{u+v}\mid u\in N_{A}^{\prox}(a),\;v\in N_{B}^{\prox}(b),\\
&\qquad\qquad a\in A\cap \B_{\rho}(\bx),\;b\in B\cap \B_{\rho}(\bx),\; \norm{u}+\norm{v}=1\Big\}.
\end{align*}
$\{A,B\}$ is uniformly regular at $\bx$ if and only if one of the following two equivalent conditions is satisfied:
\begin{enumerate}
\item
$\overline{N}_A(\bx)\cap \left(-\overline{N}_B(\bx)\right)=\{0\}$;
\item
there exist positive numbers $\alpha$ and $\delta$ such that inequality \eqref{P3-4} holds true
for all $a\in A\cap \B_{\de}(\bx)$, $b\in B\cap \B_{\de}(\bx)$, $u\in N^{\prox}_{A}(a)$, and $v\in N^{\prox}_{B}(b)$.
\end{enumerate}
\item
Suppose $X$ is a Euclidean space.\\
\underline{Angle characterizations}.
\begin{align}\label{P4-2}
\hat{c}[A,B](\bx)&=
\sup \left\{-\ang{u,v}\mid u\in \overline{N}_{A}(\bx)\cap\Sp,\; v\in\overline{N}_{B}(\bx)\cap\Sp\right\}
\\\notag
&=\lim_{\rho\downarrow 0}\sup \Big\{-\ang{u,v}\mid u\in N_{A}^{\prox}(a)\cap\Sp,\;v\in N_{B}^{\prox}(b)\cap\Sp,
\\\notag
&\qquad\qquad\qquad a\in A\cap \B_{\rho}(\bx),\;b\in B\cap \B_{\rho}(\bx)\Big\}.
\end{align}
$\{A,B\}$ is uniformly regular at $\bx$ if and only if one of the following two equivalent conditions is satisfied:
\begin{enumerate}
\item
$\set{(u,v)\in \left(\overline{N}_{A}(\bx)\cap\Sp\right)\times \left(\overline{N}_{B}(\bx)\cap\Sp\right)\mid \ang{u,v}=-1}=\emptyset$;
\item
there exist numbers $\alpha<1$ and $\delta>0$ such that inequality \eqref{P3-7} holds true
for all $a\in A\cap \B_{\de}(\bx)$, $b\in B\cap \B_{\de}(\bx)$, $u\in N_{A}^{\prox}(a)\cap\Sp$, and $v\in N_{B}^{\prox}(b)\cap\Sp$.
\end{enumerate}
\end{enumerate}
\end{proposition}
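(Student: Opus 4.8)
The plan is to obtain Proposition~\ref{P4} from Proposition~\ref{P3}, using the two relations in \eqref{prox2} and the compactness of bounded sets when $\dim X<\infty$. I first treat the non-degenerate situation, namely $\bx\notin\Int(A\cap B)$ for part (i) and $\bx\in\bd A\cap\bd B$ for part (ii). In the remaining cases, where $\bx$ lies in the interior of $A$ or of $B$, one of the limiting cones and all proximal cones at points near $\bx$ reduce to $\{0\}$, every quantity in the statement takes the boundary value ($+\infty$, $-\infty$, or $1$) already recorded for $\hat\theta[A,B](\bx)$ and $\hat c[A,B](\bx)$, and conditions (a), (b) and uniform regularity hold vacuously; this is a direct check from the definitions.

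\textbf{Part (i).} Let $\vartheta_\rho$ and $\vartheta_\rho^{\,\prox}$ denote the infima in \eqref{P3-3} taken, respectively, over the Fr\'echet and over the proximal normals at points in $\B_\rho(\bx)$, so that $\hat\theta[A,B](\bx)=\lim_{\rho\downarrow0}\vartheta_\rho$ by Proposition~\ref{P3}(ii) (finite-dimensional spaces being Asplund). Since $N_A^{\prox}\subset N_A$ and $N_B^{\prox}\subset N_B$, we have $\vartheta_\rho\le\vartheta_\rho^{\,\prox}$. For the opposite estimate, take an admissible Fr\'echet pair $u\in N_A(a)$, $v\in N_B(b)$ with $a,b\in\B_{\rho/2}(\bx)$ and $\|u\|+\|v\|=1$, approximate $u$, $v$ by proximal normals $u_k\to u$, $v_k\to v$ at points $a_k\to a$, $b_k\to b$ (possible because, by \eqref{prox2}, every Fr\'echet normal is a limit of proximal normals at nearby points), and rescale to restore $\|u_k\|+\|v_k\|=1$ (legitimate since $N^{\prox}$ is a cone and the factor tends to $1$); this gives $\vartheta_\rho^{\,\prox}\le\vartheta_{\rho/2}$, and letting $\rho\downarrow0$ yields the proximal formula for $\hat\theta[A,B](\bx)$. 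For the limiting-cone formula, one inequality holds because any $(u,v)\in\overline N_A(\bx)\times\overline N_B(\bx)$ with $\|u\|+\|v\|=1$ is a limit of rescaled proximal pairs at points converging to $\bx$, so $\|u+v\|\ge\vartheta_\rho^{\,\prox}$ for every $\rho$ and hence $\|u+v\|\ge\hat\theta[A,B](\bx)$; the reverse uses compactness — pick $(u_\rho,v_\rho)$ nearly optimal for $\vartheta_\rho^{\,\prox}$, extract a convergent subsequence $u_\rho\to u$, $v_\rho\to v$, and note that $u\in\overline N_A(\bx)$, $v\in\overline N_B(\bx)$, $\|u\|+\|v\|=1$ and $\|u+v\|\le\hat\theta[A,B](\bx)$. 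Finally, uniform regularity is equivalent to $\hat\theta[A,B](\bx)>0$; the feasible set of the limiting-cone infimum being compact when nonempty, that infimum vanishes precisely when some $w\ne0$ satisfies $w\in\overline N_A(\bx)$ and $-w\in\overline N_B(\bx)$, which is (a); and (b) is equivalent to $\vartheta_\rho^{\,\prox}>0$ for some $\rho>0$ (by positive homogeneity of \eqref{P3-4}), hence to $\hat\theta[A,B](\bx)>0$ by the formula just proved.

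\textbf{Part (ii).} The argument repeats that of part (i) with $\|u+v\|$, infima and $\hat\theta[A,B](\bx)$ replaced throughout by $-\ang{u,v}$, suprema and $\hat c[A,B](\bx)$, with $\B$ replaced by the unit sphere $\Sp$, and starting from \eqref{P3-6} and Proposition~\ref{P3}(iii); the only new point is that the normals now lie on $\Sp$, hence are nonzero, so rescaling their proximal or Fr\'echet approximants onto $\Sp$ is harmless. This yields the two identities in \eqref{P4-2}. Since the set $(\overline N_A(\bx)\cap\Sp)\times(\overline N_B(\bx)\cap\Sp)$ is compact, the limiting supremum is attained, so $\hat c[A,B](\bx)<1$ holds exactly when no pair there satisfies $\ang{u,v}=-1$, which is (a); and (b) is equivalent to the proximal supremum being $<1$ on some ball around $\bx$, hence to $\hat c[A,B](\bx)<1$, hence to uniform regularity by Proposition~\ref{P3}(iii).

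\textbf{Main obstacle.} No step is genuinely deep: the proof is an exercise in passing back and forth between proximal, Fr\'echet and limiting normals via \eqref{prox2}, combined with compactness in finite dimensions. The most delicate point is the bookkeeping in the neighbourhood-shrinking and rescaling estimate $\vartheta_\rho^{\,\prox}\le\vartheta_{\rho/2}$ and its angular counterpart, together with the routine but unavoidable verification of the degenerate configurations where $\bx$ lies in the interior of $A$ or of $B$.
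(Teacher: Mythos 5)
Your proposal is correct and follows exactly the route the paper itself intends: Proposition~\ref{P4} is presented there as a direct consequence of Proposition~\ref{P3} combined with the observations \eqref{prox2} (proximal normals are Fr\'echet normals, and the limiting cone is the outer limit of proximal cones) plus finite-dimensional compactness, which is precisely your argument. The rescaling/neighbourhood-shrinking bookkeeping and the separate treatment of the degenerate interior cases are handled correctly, so nothing is missing.
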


\begin{remark}
1. Condition (a) in part (i) of the above proposition is a ubiquitous qualification condition in optimization and variational analysis; cf. \emph{basic qualification condition} \cite{Mor06.1} and \emph{transversality} condition \cite{LewMal08,DruIofLew14}.

2. If one replaces $\Sp$ with $\overline{\B}$ in representation \eqref{P4-2}, one will get nonnegative constant \eqref{barc}.
The relationship between the two constants is straightforward: $\bar c=\max\{\hat c[A,B](\bx),0\}$.
\end{remark}

\subsection{Super-regularity}
In the next several subsections, we follow \cite{LewLukMal09} and \cite{BauLukPhaWan13.2,BauLukPhaWan13.1}, respectively.
Although some definitions and assertions are valid in arbitrary Hilbert spaces, in accordance with the setting of \cite{LewLukMal09} and \cite{BauLukPhaWan13.2,BauLukPhaWan13.1}, we assume in these two subsections that
$X$ is a finite dimensional Euclidean space.

Unlike the uniform regularity, the super-regularity property is defined for a single set.
The next definition contains a list of equivalent characterizations of this property which come from \cite[Definition~4.3, Proposition~4.4, and Corollary~4.10]{LewLukMal09}, respectively.

\begin{definition}\label{sup_reg}
A closed subset $A\subset X$ is super-regular at a point $\bx\in A$ if one of the following equivalent conditions is satisfied:
\begin{enumerate}
\item
for any $\gamma>0$, there exists a $\de>0$ such that
$$
\langle x-x_A,a-x_A\rangle\le \gamma\|x-x_A\|\,\|a-x_A\|
$$
for all $x\in\B_\de(\bx)$, $x_A\in P_{A}(x)$, and $a\in A\cap\B_\de(\bx)$;
\item
for any $\gamma>0$, there exists a $\delta>0$ such that
\begin{gather}\label{l2-1}
\langle u,x-a\rangle\le \gamma\|u\|\,\|x-a\|
\end{gather}
for all $x,a\in A\cap\B_\de(\bx)$ and $u\in{N}_{A}(a)$;
\item
for any $\gamma>0$, there exists a $\delta>0$ such that
$$
\langle v-u,y-x\rangle\ge -\gamma\|y-x\|
$$
for all $x,y\in A\cap\B_\de(\bx)$ and $u\in{N}_{A}(x)$ and $v\in{N}_{A}(y)$.
\end{enumerate}
\end{definition}

\begin{remark}
1. Super-regu\-larity is a kind of local ``near convexity'' property, refining or complementing a number of properties of this kind: \emph{Clarke regularity} \cite{ClaSteWol95,RocWet98}, \emph{amenability} \cite{RocWet98}, \emph{prox-regularity} \cite{PolRocThi00,RocWet98}, and \emph{subsmoothness} \cite{AusDanThi05} (cf. \emph{first order Shapiro property} \cite{Sha94}).
For a detailed discussion and comparing of the properties we refer the reader to \cite{LewLukMal09}.

2. Super-regu\-larity of one of the sets is an important ingredient of the convergence analysis of projection methods following the scheme initiated in Lewis et al. \cite{LewLukMal09}; cf. Theorems~\ref{LLM} and \ref{T31}.
In fact, a weaker ``quantified'' version of this property corresponding to fixing $\gamma>0$ in Definition~\ref{sup_reg} (and Definition~\ref{Bsup_reg} below), i.e., a kind of \emph{$\gamma$-super-regu\-larity} is sufficient for this type of analysis; cf.  \cite[Definition~8.1]{BauLukPhaWan13.2} and \cite[Definition~2]{NolRon} (The latter definition introduces a more advanced H\"older version of this property.)
Of course for alternating projections to converge, $\gamma$ must be small and the convergence rate depends on $\gamma$.
\end{remark}

\subsection{Restricted normal cones and restricted super-regu\-larity}
There have been several successful attempts to relax the discussed above regularity properties by restricting the set of involved (normal) directions to only those relevant for characterizing alternating projections.

The definitions of \emph{restricted normal cones} to a set introduced in \cite{BauLukPhaWan13.2} take into account another set and generalize proximal and limiting normal cones \eqref{prox} and \eqref{lFr} in the setting of a Euclidean space:
\begin{gather}\label{Bprox}
N_A^{B-\prox}(a):=\con\left((P_A^{-1}(a)\cap B)-a\right),
\\\label{BlFr}
\overline{N}_A^B(\bx):= \Limsup_{a\TO{A}\bx}N_A^{B-\prox}(a).
\end{gather}
Sets \eqref{Bprox} and \eqref{BlFr} are called, respectively, the \emph{$B$-proximal normal cone} to $A$ at $a\in A$ and \emph{$B$-limiting normal cone} to $A$ at $\bx$.
When $B$ is the whole space, they obviously coincide with \eqref{prox} and \eqref{lFr} (cf. the representation of the limiting normal cone given by the equality in \eqref{prox2}).
Note that cones \eqref{Bprox} and \eqref{BlFr} can be empty.

Similarly to \eqref{Bprox}, one can define also the \emph{$B$-Fr\'echet normal cone} to $A$ at $a\in A$:
\begin{gather*}
N_A^{B}(a):=N_A(a)\cap\con\left(B-a\right)
\end{gather*}
and the corresponding limiting one.
The following inclusions are straightforward:
\begin{gather*}
N_A^{B-\prox}(a)\subset N_A^{B}(a)\subset N_A(a).
\end{gather*}

\begin{definition}\label{Bsup_reg}
A closed subset $A\subset X$ is $B$-super-regular at a point $\bx\in A$ if, for any $\gamma>0$, there exists a $\de>0$ such that condition \eqref{l2-1} holds true
for all $x,a\in A\cap\B_\de(\bx)$ and $u\in{N}_{A}^{B-\prox}(a)$.
\end{definition}

\begin{remark}
As observed in \cite{BauLukPhaWan13.2}, $B$-proximal normals in Definition~\ref{Bsup_reg} can be replaced with $B$-limiting ones.
Similarly, in Definition~\ref{sup_reg}(ii) and (iii), one can replace Fr\'echet normals with limiting ones.
\end{remark}

\subsection{BLPW-restricted regularity}
The next definition introduces a modification of the property used in the angle characterization of the uniform regularity in Proposition~\ref{P3}(iii). This new property and its subsequent characterizations and application in convergence estimate (Theorem~\ref{BLPW-T}) originate in Bauschke, Luke, Phan, and Wang \cite{BauLukPhaWan13.2,BauLukPhaWan13.1}.
We are going to use for the regularity property of a collection of two sets discussed below the term \emph{BLPW-restricted regularity}.

\begin{definition}\label{BLPW}
A collection of closed sets $\{A,B\}$ is BLPW-restrictedly regular at $\bx\in A\cap B$ if \begin{align}\notag
\hat{c}_{1}[A,B](\bx)
&:=\lim_{\rho\downarrow 0}\sup \Big\{-\ang{u,v}\mid u\in N_{A}^{B-\prox}(a)\cap\Sp,\;v\in N_{B}^{A-\prox}(b)\cap\Sp,
\\\label{BLPW-1}
&\qquad\qquad\qquad a\in A\cap \B_{\rho}(\bx),\;b\in B\cap \B_{\rho}(\bx)\Big\}<1,
\end{align}
i.e., there exist numbers $\alpha<1$ and $\delta>0$ such that condition \eqref{P3-7} holds
for all $a\in A\cap \B_{\de}(\bx)$, $b\in B\cap \B_{\de}(\bx)$, $u\in N_{A}^{B-\prox}(a)\cap\Sp$, and $v\in N_{B}^{A-\prox}(b)\cap\Sp$.
\end{definition}

\begin{proposition}\label{P14}
\begin{enumerate}
\item
The following representation holds true:
\begin{align}\label{P14-1}
\hat{c}_{1}[A,B](\bx)
&=\sup \left\{-\ang{u,v}\mid u\in \overline{N}_{A}^B(\bx)\cap\Sp,\; v\in\overline{N}_{B}^A(\bx)\cap\Sp\right\}.
\end{align}
\item
If either $\overline{N}_{A}^B(\bx)\cap\Sp\ne\emptyset$ and $\overline{N}_{B}^A(\bx)\cap\Sp\ne\emptyset$, or $\overline{N}_{A}^B(\bx)\cap\Sp= \overline{N}_{B}^A(\bx)\cap\Sp=\emptyset$, then $$\hat{c}_{1}[A,B](\bx)=1-2\hat{\theta}^2_1[A,B](\bx),$$
where
\begin{align*}
\hat{\theta}_1[A,B](\bx)
=\lim_{\rho\downarrow 0}\inf &\Big\{\norm{u+v}\mid u\in N_{A}^{B-\prox}(a),\;v\in N_{B}^{A-\prox}(b),\\
&a\in A\cap \B_{\rho}(\bx),\;b\in B\cap \B_{\rho}(\bx),\; \norm{u}+\norm{v}=1\Big\}
\\
=\inf& \left\{\norm{u+v}\mid u\in \overline{N}_{A}^{B}(a),\;v\in \overline{N}_{B}^{A}(b),\; \norm{u}+\norm{v}=1\right\}.
\end{align*}
\item
A collection of closed sets $\{A,B\}$ is BLPW-restrictedly regular at $\bx\in A\cap B$ if and only if one of the following conditions holds true: \begin{gather}\label{P14-2}
\bar{c}_1:=
\sup \left\{-\ang{u,v}\mid u\in \overline{N}_{A}^B(\bx)\cap\overline{\B},\; v\in\overline{N}_{B}^A(\bx)\cap\overline{\B}\right\}<1,
\\\notag
\hat{\theta}_1[A,B](\bx)>0,
\\\label{P14-3}
\overline{N}_A^B(\bx)\cap \left(-\overline{N}_B^A(\bx)\right)\subseteq\{0\}.
\end{gather}
\end{enumerate}
\end{proposition}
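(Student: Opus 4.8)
The plan is to prove the three items of Proposition~\ref{P14} in the order stated, relying on the already-established analogues for the unrestricted uniform regularity (Propositions~\ref{P3}(iii) and \ref{P4}(ii)) and on the definitions \eqref{Bprox}--\eqref{BlFr} of the restricted normal cones. The structure mirrors exactly the passage from Proposition~\ref{P3}(iii) to Proposition~\ref{P4}(ii), with the role of $N_A^{\prox}$, $\overline N_A$ played now by $N_A^{B-\prox}$, $\overline N_A^B$.

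\textbf{Part (i).} I would show the two-sided inequality between $\hat c_1[A,B](\bx)$ as defined in \eqref{BLPW-1} and the right-hand side of \eqref{P14-1}. The inequality ``$\le$'' follows by a standard closure/limit argument: take sequences $a_k\TO{A}\bx$, $b_k\TO{B}\bx$ and unit vectors $u_k\in N_A^{B-\prox}(a_k)$, $v_k\in N_B^{A-\prox}(b_k)$ with $-\ang{u_k,v_k}\to\hat c_1[A,B](\bx)$; by compactness of the unit sphere in the finite-dimensional space pass to convergent subsequences $u_k\to u$, $v_k\to v$, so by definition \eqref{BlFr} we get $u\in\overline N_A^B(\bx)\cap\Sp$, $v\in\overline N_B^A(\bx)\cap\Sp$, and continuity of the inner product gives $-\ang{u,v}=\hat c_1[A,B](\bx)$. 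Conversely ``$\ge$'': any $u\in\overline N_A^B(\bx)\cap\Sp$ is a limit of $u_k\in N_A^{B-\prox}(a_k)$ with $a_k\TO{A}\bx$ (and similarly for $v$), so for the normalized vectors $u_k/\|u_k\|$, $v_k/\|v_k\|$ the quantities $-\ang{u_k/\|u_k\|,v_k/\|v_k\|}$ are admissible in the supremum defining $\hat c_1[A,B](\bx)$ and converge to $-\ang{u,v}$.

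\textbf{Part (ii).} This is an elementary identity: for unit vectors $u,v$ one has $\|u+v\|^2 = 2+2\ang{u,v}=2-2(-\ang{u,v})$. Taking infimum over the admissible pairs $(u,v)$ for $\hat\theta_1$ (restricted to the unit sphere, which is legitimate after homogenizing the constraint $\|u\|+\|v\|=1$ and using that the extremal configuration is attained with $\|u\|=\|v\|=1/2$, or rather by rescaling) and supremum for $\hat c_1$, one converts $\inf\|u+v\|^2 = 2-2\sup(-\ang{u,v})$, i.e. $2\hat\theta_1^2 = 1-\hat c_1$. The two expressions for $\hat\theta_1[A,B](\bx)$ (via $\rho$-liminf of proximal cones versus via limiting cones) are established by the same compactness argument as in Part (i). The hypothesis that the sphere sections are either both nonempty or both empty is needed precisely to reconcile the conventions $\inf\emptyset=+\infty$, $\sup\emptyset=-\infty$ so that the identity $\hat c_1 = 1-2\hat\theta_1^2$ holds literally in the degenerate case (both sides become, respectively, $-\infty$ and $1-2\cdot(+\infty)^2$, interpreted consistently).

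\textbf{Part (iii).} Finally I would verify the equivalence of BLPW-restricted regularity with each of \eqref{P14-2}, the condition $\hat\theta_1[A,B](\bx)>0$, and \eqref{P14-3}. Using Remark~2 after Proposition~\ref{P4}, $\bar c_1 = \max\{\hat c_1[A,B](\bx),0\}$, so $\bar c_1<1 \iff \hat c_1[A,B](\bx)<1$, which is the definition. The equivalence with $\hat\theta_1[A,B](\bx)>0$ is immediate from Part (ii): $\hat c_1<1 \iff \hat\theta_1^2>0 \iff \hat\theta_1>0$ (again handling the empty case separately, where $\hat\theta_1=+\infty>0$ and $\hat c_1=-\infty<1$). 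The equivalence with \eqref{P14-3} follows from the representation \eqref{P14-1}: if there existed a nonzero $w\in\overline N_A^B(\bx)\cap(-\overline N_B^A(\bx))$, then $u:=w/\|w\|$ and $v:=-w/\|w\|$ would be admissible unit vectors with $-\ang{u,v}=\|u\|^2=1$, forcing $\hat c_1[A,B](\bx)\ge1$; conversely, if $\hat c_1[A,B](\bx)=1$ then by Part (i) and compactness there are unit vectors $u\in\overline N_A^B(\bx)$, $v\in\overline N_B^A(\bx)$ with $-\ang{u,v}=1$, which by the equality case of Cauchy--Schwarz gives $u=-v$, hence $u\in\overline N_A^B(\bx)\cap(-\overline N_B^A(\bx))$ is a nonzero common element, contradicting \eqref{P14-3}. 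The main obstacle, such as it is, lies less in any single step than in consistently tracking the emptiness conventions across all three parts: the restricted normal cones \eqref{Bprox}--\eqref{BlFr} may be empty (unlike their unrestricted counterparts, which always contain $0$), so one must carefully distinguish the degenerate regime throughout, which is exactly why the hypotheses of Part (ii) and the ``$\subseteq\{0\}$'' (rather than ``$=\{0\}$'') in \eqref{P14-3} are phrased the way they are.
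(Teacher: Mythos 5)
The paper itself gives no proof of Proposition~\ref{P14}: it is stated as imported from Bauschke, Luke, Phan and Wang \cite{BauLukPhaWan13.2,BauLukPhaWan13.1}, so there is nothing internal to compare against. Your argument is essentially correct and is the natural one, mirroring exactly the paper's own passage from Proposition~\ref{P3}(iii) to Proposition~\ref{P4}(ii): positive homogeneity of the cones plus compactness of $\Sp$ in finite dimensions for part (i), the identity $\|u+v\|^2=1-2st\left(1+\ang{\hat u,\hat v}\right)$ with $s+t=1$ optimized at $s=t=1/2$ for part (ii), and closedness of the outer limits plus the equality case of Cauchy--Schwarz for \eqref{P14-3} in part (iii). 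Two micro-points worth tightening: your formula $\bar c_1=\max\{\hat c_1[A,B](\bx),0\}$ fails literally when one of the restricted limiting cones is empty (then $\bar c_1=-\infty$, as the paper's own remark notes), though the equivalence $\bar c_1<1\Leftrightarrow\hat c_1[A,B](\bx)<1$ that you actually need survives in every case; and in part (iii) the equivalence $\hat\theta_1[A,B](\bx)>0\Leftrightarrow\hat c_1[A,B](\bx)<1$ cannot be read off part (ii) in the ``mixed'' degenerate case (exactly one of $\overline N_A^B(\bx)\cap\Sp$, $\overline N_B^A(\bx)\cap\Sp$ empty), since part (ii)'s hypothesis excludes it --- there one checks directly that $\hat c_1[A,B](\bx)=-\infty$ while $\hat\theta_1[A,B](\bx)\ge1$, so both conditions hold and the equivalence is unaffected.
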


\begin{remark}
1. The difference between formula \eqref{P14-1} and definition of $\bar{c}_1$ in \eqref{P14-2} is that, in the latter one, closed unit balls are used instead of spheres.
As a result, $\bar{c}_1$ is either nonnegative or equal $-\infty$.
(The latter case is possible because restricted normal cones can be empty.)
At the same time, conditions $\hat{c}_{1}[A,B](\bx)<1$ and $\bar{c}_1<1$ are equivalent and $\bar{c}_1$ can be used for characterizing BLPW-restricted regularity.
The inequality $\bar{c}_1\le\bar{c}$, where $\bar{c}$ is given by \eqref{barc}, is obvious.
It can be strict; cf. \cite[Example~7.1]{BauLukPhaWan13.2}.

2. In \cite{BauLukPhaWan13.2}, a more general setting of four sets $A,B,\widetilde A,\widetilde B$ is considered with the $A$- and $B$-proximal and limiting normals cones in Definition~\ref{BLPW} and Proposition~\ref{P14} replaced by their $\widetilde A$ and $\widetilde B$ versions.
As described in \cite[Subsection~3.6]{BauLukPhaWan13.1}, this provides additional flexibility in applications when determining regularity properties.
To simplify the presentation, in this article we set $\widetilde A=A$ and $\widetilde B=B$.

3. Condition~\eqref{P14-3} is referred to in \cite{BauLukPhaWan13.2} as \emph{$(A,B)$-qualification condition} while constant \eqref{BLPW-1} is called the \emph{limiting CQ number}.
\end{remark}

\begin{theorem}\label{BLPW-T}
Let $X$ be a finite dimensional Euclidean space.
Suppose that
\begin{enumerate}
\item
$\{A,B\}$ is BLPW-restrictedly regular at $\bx\in A\cap B$;
\item
$A$ is $B$-super-regular at $\bx$.
\end{enumerate}
Then, for any $c\in(\bar{c}_{1},1)$, a sequence of alternating projections for $\{A,B\}$ with initial point sufficiently close to $\bx$ converges to a point in $A\cap B$ with $R-$linear rate $\sqrt{c}$.
\end{theorem}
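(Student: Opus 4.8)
The plan is to follow the scheme of Lewis, Luke and Malick, replacing every invocation of exact regularity and super-regularity with their $B$-restricted counterparts. Fix $c\in(\bar c_1,1)$ and pick $\gamma>0$ small enough that the elementary estimate below closes; by Proposition~\ref{P14}(iii) the BLPW-restricted regularity gives a $\delta_1>0$ and an $\alpha\in(\sqrt c,1)$ with $-\langle u,v\rangle<\alpha^2$ (say) for all $a\in A\cap\B_{\delta_1}(\bx)$, $b\in B\cap\B_{\delta_1}(\bx)$, $u\in N_A^{B\text{-}\mathrm{prox}}(a)\cap\Sp$, $v\in N_B^{A\text{-}\mathrm{prox}}(b)\cap\Sp$, while the $B$-super-regularity of $A$ (Definition~\ref{Bsup_reg}) gives a $\delta_2>0$ making $\langle u,x-a\rangle\le\gamma\|u\|\,\|x-a\|$ hold for all $x,a\in A\cap\B_{\delta_2}(\bx)$ and $u\in N_A^{B\text{-}\mathrm{prox}}(a)$. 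Set $\delta=\min\{\delta_1,\delta_2\}$ and choose the initial point $x_0$ within distance $\varepsilon\ll\delta$ of $\bx$.

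The core of the argument is a single ``one-step'' geometric lemma: given consecutive iterates $a_1=x_{2n}\in A$, $b=x_{2n+1}\in P_B(a_1)$, $a_2=x_{2n+2}\in P_A(b)$, all lying in $\B_\delta(\bx)$, one shows
\begin{equation*}
\|a_2-b\|\le c\,\|a_1-b\|\quad\text{equivalently}\quad d(x_{2n+2},B)\le c\,d(x_{2n},B)
\end{equation*}
after passing to the even subsequence. The key observation is that $b-a_1\in N_A^{B\text{-}\mathrm{prox}}(a_1)$ (because $a_1$ is a nearest point of $A$ to $b\in B$, so $a_1\in P_A^{-1}(a_1)\cap B$-type relation holds after one realizes $b\in B$) — more precisely, since $b\in B$ and $a_1\in P_A(b)$ (which holds because $a_1=x_{2n}\in P_A(x_{2n-1})$ with $x_{2n-1}\in B$ for $n\ge1$; the case $n=0$ is handled by starting the estimate at $n=1$), we get $b-a_1\in N_A^{B\text{-}\mathrm{prox}}(a_1)$, and similarly $a_1-b\in N_B^{A\text{-}\mathrm{prox}}(b)$ and $b-a_2\in N_A^{B\text{-}\mathrm{prox}}(a_2)$. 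Decompose $a_2-a_1$ and use: (1) $b$-super-regularity applied to the normal $b-a_2$ at $a_2$ against the vector $a_1-a_2\in A$-directions, giving $\langle b-a_2,a_1-a_2\rangle\le\gamma\|b-a_2\|\,\|a_1-a_2\|$; (2) the angle condition applied to the unit vectors along $b-a_1\in N_A^{B\text{-}\mathrm{prox}}(a_1)$ and $a_1-b\in N_B^{A\text{-}\mathrm{prox}}(b)$... — here one has to be careful, the angle condition couples a normal to $A$ with a normal to $B$, and $b-a_2$ is a normal to $A$ while $a_1-b$ is a normal to $B$, so the relevant pairing is $\langle b-a_2, -(a_1-b)\rangle=\langle b-a_2, b-a_1\rangle$, bounded below by $-\alpha^2\|b-a_2\|\,\|b-a_1\|$. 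Combining these two inequalities via the Pythagorean-type identity $\|a_2-b\|^2=\|a_1-b\|^2-2\langle a_1-b, a_1-a_2\rangle - \|a_1-a_2\|^2+\dots$ (expanding $\|a_2-b\|^2$ against $\|a_1-b\|^2$ through the midpoint $a_2$, using that $a_2$ is the projection so $\|a_2-b\|\le\|a_1-b\|$ already, and then refining) yields the contraction factor, which tends to $\bar c_1$ (plus a $\gamma$-dependent term) as the neighbourhood shrinks; choosing $\gamma$ small makes it $\le c$.

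Once the one-step contraction $d(x_{2n+2},B)\le c\,d(x_{2n},B)$ is in hand, summability follows: $\|x_{2n+2}-x_{2n}\|\le\|x_{2n+2}-x_{2n+1}\|+\|x_{2n+1}-x_{2n}\|\le(1+\sqrt c)\,d(x_{2n},B)\le(1+\sqrt c)c^{\,n}d(x_0,B)$, a geometric series, so $\{x_{2n}\}$ is Cauchy and converges to some $\bar x^*$; the odd terms converge to the same limit since $\|x_{2n+1}-x_{2n}\|\to0$; closedness of $A$ and $B$ forces $\bar x^*\in A\cap B$; and the $R$-linear rate $\sqrt c$ follows from $\|x_n-\bar x^*\|\le C(\sqrt c)^{\,n}$ by summing the tail. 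A standard bookkeeping point must be checked throughout: all iterates stay in $\B_\delta(\bx)$ — this is a routine induction using that the total path length from $x_0$ is bounded by $\frac{1+\sqrt c}{1-c}d(x_0,B)\le\frac{1+\sqrt c}{1-c}\cdot2\varepsilon$, which is $<\delta-\varepsilon$ for $\varepsilon$ small, so that $\|x_n-\bx\|\le\|x_n-x_0\|+\|x_0-\bx\|<\delta$.

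I expect the main obstacle to be the one-step lemma: specifically, verifying that the relevant difference vectors genuinely lie in the restricted proximal normal cones $N_A^{B\text{-}\mathrm{prox}}$ and $N_B^{A\text{-}\mathrm{prox}}$ (not merely in the ordinary proximal normal cones) at points inside $\B_\delta(\bx)$, and then assembling the angle bound and the $B$-super-regularity bound into a clean contraction constant that provably decreases to $\bar c_1$. The subtlety is that the angle condition and the super-regularity condition each control a different pair of vectors, and one needs the right Pythagorean decomposition — expanding $\|a_2-b\|^2=\|a_1-b\|^2-\|a_1-a_2\|^2-2\langle a_1-a_2,a_2-b\rangle$ and bounding the two negative-looking terms — to see that the cross term is exactly where the angle between $N_A^{B\text{-}\mathrm{prox}}$ and $-N_B^{A\text{-}\mathrm{prox}}$ enters. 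Getting the correct power of $c$ (namely $\sqrt c$ as the $R$-rate for $\{x_n\}$ versus $c$ for the even subsequence and for $d(x_{2n},B)$) is the other place where care is needed, but it is bookkeeping once the contraction is established.
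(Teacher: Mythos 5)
First, a point of reference: the paper does not prove Theorem~\ref{BLPW-T} at all --- it is quoted from Bauschke, Luke, Phan and Wang --- so your proposal can only be compared with the Lewis--Luke--Malick scheme that the cited source adapts. That is indeed the scheme you follow, and the overall architecture (a one-step contraction, induction to keep iterates in the good neighbourhood, geometric summation, rate $\sqrt c$ for the whole sequence versus $c$ for the even subsequence and for $d(x_{2n},B)$) is the right one.

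There are, however, concrete defects precisely in the step you identify as the crux. (1) The claim $b-a_1\in N_A^{B-\prox}(a_1)$ is false as justified: it would require $a_1\in P_A(b)$ with $b=x_{2n+1}$, whereas what you know is $a_1=x_{2n}\in P_A(x_{2n-1})$, and $x_{2n-1}\ne x_{2n+1}$ in general. Fortunately this vector is never needed: the two normals that matter are $a_1-b\in N_B^{A-\prox}(b)$ (valid simply because $a_1\in A$ and $b\in P_B(a_1)$, so no special treatment of $n=0$ is required) and $b-a_2\in N_A^{B-\prox}(a_2)$ (valid because $b\in B$ and $a_2\in P_A(b)$); note the normal to $A$ lives at $a_2$, not at $a_1$. (2) The angle condition yields an \emph{upper} bound $\langle b-a_2,b-a_1\rangle=-\langle b-a_2,a_1-b\rangle\le\alpha\|b-a_2\|\,\|a_1-b\|$, not the lower bound you state, and the upper bound is what the contraction needs. (3) The assembly is left open; the clean route is not your Pythagorean expansion but the splitting $\|b-a_2\|^2=\langle b-a_2,b-a_1\rangle+\langle b-a_2,a_1-a_2\rangle$, bounding the first term by the angle condition and the second by $B$-super-regularity, then using $\|a_1-a_2\|\le\|a_1-b\|+\|b-a_2\|\le2\|a_1-b\|$ (since $\|b-a_2\|=d(b,A)\le\|b-a_1\|$) to obtain $\|b-a_2\|\le(\alpha+2\gamma)\|a_1-b\|$; choosing $\alpha>\bar c_1$ and $\gamma>0$ with $\alpha+2\gamma\le c$ closes the step. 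With these repairs the argument goes through.
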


\subsection{BLPW-DIL-restricted regularity}
The concept of BLPW-restricted regularity was further refined in Drusvyatskiy, Ioffe and Lewis \cite[Definition~4.4]{DruIofLew14}.
We are going to call the amended property \emph{BLPW-DIL-restricted regularity}.

\begin{definition}\label{BLPW2}
A collection of closed sets $\{A,B\}$ is BLPW-DIL-restrictedly regular at $\bx\in A\cap B$ if
\begin{align}\notag
\hat{c}_{2}[A,B](\bx)
&:=\lim_{\rho\downarrow 0}\sup \Big\{-\frac{\ang{a-b_a,b-a_b}} {\|a-b_a\|\,\|b-a_b\|}\mid b_a\in P_B(a),\;a_b\in P_A(b),
\\\label{BLPW2-1}
&\qquad\qquad\qquad a\in(A\setminus B)\cap \B_{\rho}(\bx),\;b\in(B\setminus A)\cap \B_{\rho}(\bx)\Big\}<1,
\end{align}
i.e., there exist numbers $\alpha<1$ and $\delta>0$ such that
$$
-\ang{a-b_a,b-a_b}<\al\|a-b_a\|\,\|b-a_b\|
$$
for all $a\in(A\setminus B)\cap \B_{\delta}(\bx)$, $b\in(B\setminus A)\cap \B_{\delta}(\bx)$, $b_a\in P_B(a)$, and $a_b\in P_A(b)$.
\end{definition}

\begin{remark}
The property in Definition~\ref{BLPW2} is referred to in \cite{DruIofLew14} as \emph{inherent transversality}.
\end{remark}

An analogue of Proposition~\ref{P14} holds true with constant $\hat{\theta}_1[A,B](\bx)$ replaced by
\begin{multline*}
\hat{\theta}_2[A,B](\bx)
=\frac{1}{2}\lim_{\rho\downarrow 0}\inf \Biggl\{\norm{\frac{a-b_a}{\|a-b_a\|}+\frac{b-a_b}{\|b-a_b\|}}\mid b_a\in P_B(a),\;a_b\in P_A(b),\\
a\in(A\setminus B)\cap \B_{\rho}(\bx),\;b\in(B\setminus A)\cap \B_{\rho}(\bx)\Biggr\}
\end{multline*}
and appropriate limiting objects.

It is easy to see that a BLPW-restrictedly regular collection is also BLPW-DIL-restrictedly regular, but the converse is not true in general.

\subsection{DIL-restricted regularity}
The next definition and its subsequent characterizations originate in Drusvyatskiy, Ioffe and Lewis \cite{DruIofLew14}.
We are going to use for the regularity property of a collection of two sets discussed below the term \emph{DIL-restricted regularity}.

Unlike \cite{DruIofLew14}, if not specified otherwise, we adopt in this subsection the setting of a general Hilbert space.

\begin{definition}\label{P19}
A collection of closed sets $\{A,B\}$ is DIL-restrictedly regular at $\bx\in A\cap B$ if
\begin{multline}\label{P19-1}
\hat{\theta}_{4}[A,B](\bx)
:=\lim_{\rho\downarrow 0}\inf_{\substack{a\in(A\setminus B)\cap \B_{\rho}(\bx),\,b\in(B\setminus A)\cap \B_{\rho}(\bx)}}
\\
\max\biggl\{d\left(\frac{b-a}{\|a-b\|}, N_{A}(a)\right),
d\left(\frac{a-b}{\|a-b\|}, N_{B}(b)\right)\biggr\}>0,
\end{multline}
i.e., there exist positive numbers $\gamma$ and $\delta>0$ such that
\begin{equation}\label{int_con}
\max\biggl\{d\left(\frac{b-a}{\|a-b\|}, N_{A}(a)\right),
d\left(\frac{a-b}{\|a-b\|}, N_{B}(b)\right)\biggr\}>\gamma
\end{equation}
for all $a\in(A\setminus B)\cap \B_{\delta}(\bx)$ and $b\in(B\setminus A)\cap \B_{\delta}(\bx)$.
\end{definition}

\begin{proposition}\label{DIL}
A collection of closed sets $\{A,B\}$ is DIL-restrictedly regular at $\bx\in A\cap B$ if and only if
\begin{multline}\label{DIL-1}
\hat{c}_{4}[A,B](\bx)
:=\lim_{\rho\downarrow 0}\sup \Big\{\frac{\min\{\ang{b-a,u},\ang{a-b,v}\}} {\|a-b\|}\mid u\in N_{A}(a)\cap\Sp,
\\
v\in N_{B}(b)\cap\Sp,\;
a\in(A\setminus B)\cap \B_{\rho}(\bx),\;b\in(B\setminus A)\cap \B_{\rho}(\bx)\Big\}<1,
\end{multline}
i.e., there exist numbers $\alpha<1$ and $\delta>0$ such that
$$
\min\{\ang{b-a,u},\ang{a-b,v}\}<\al\|a-b\|
$$
for all $a\in(A\setminus B)\cap \B_{\delta}(\bx)$, $b\in(B\setminus A)\cap \B_{\delta}(\bx)$, $u\in N_{A}(a)\cap\Sp$, and $v\in N_{B}(b)\cap\Sp$.

Moreover, $(\hat{c}_{4}[A,B](\bx))^2+(\hat{\theta}_{4}[A,B](\bx))^2=1$.
\end{proposition}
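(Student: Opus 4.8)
The plan is to establish the two claims --- the equivalence of DIL-restricted regularity with $\hat c_4[A,B](\bx)<1$, and the Pythagorean identity $\hat c_4^2+\hat\theta_4^2=1$ --- by reducing both to a single pointwise geometric fact. Fix $a\in A\setminus B$ and $b\in B\setminus A$ close to $\bx$, and write $w:=\frac{b-a}{\|a-b\|}$, a unit vector. The key observation is that for any unit vector $u$ (playing the role of a Fr\'echet normal to $A$ at $a$), the quantity $\ang{b-a,u}/\|a-b\|=\ang{w,u}$ is exactly $\cos$ of the angle between $w$ and $u$, and hence, by the standard relation between inner product and distance on the sphere, $\ang{w,u}=1-\tfrac12\|w-u\|^2$; more importantly, $\sup\{\ang{w,u}\mid u\in N_A(a)\cap\Sp\}$ and $d(w,N_A(a))$ are linked because $N_A(a)$ is a cone: one has the elementary identity $d(w,N_A(a))^2=1-\bigl(\sup\{\ang{w,u}\mid u\in N_A(a)\cap\Sp\}\bigr)_+^2$ whenever $N_A(a)\ne\{0\}$ (projection of a unit vector onto a cone), with the convention that the supremum over the empty sphere intersection is $-\infty$ handled separately. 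This is the main technical lemma and, I expect, the main obstacle: one must be careful about the cases $N_A(a)=\{0\}$ (then $d(w,N_A(a))=\|w\|=1$ and the sup is vacuous) and about whether the nearest point in the cone lands in the ``positive'' part, so that the $(\,\cdot\,)_+$ truncation is genuinely needed. Note also that $a\ne b$ throughout since $a\notin B$, $b\notin A$, so all denominators are nonzero.

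Granting that pointwise identity for each of $A$ at $a$ and $B$ at $b$, the rest is bookkeeping. For the equivalence: the left-hand max in \eqref{int_con} exceeds $\gamma$ for all admissible $a,b$ iff for each such pair \emph{at least one} of $d(w,N_A(a))$, $d(-w,N_B(b))$ exceeds $\gamma$, which by the pointwise identity translates into: \emph{at least one} of $\sup_u\ang{w,u}$, $\sup_v\ang{-w,v}$ is bounded away from $1$ --- equivalently (taking the negative and using $\ang{-w,v}=\ang{a-b,v}/\|a-b\|$) the \emph{minimum} of $\ang{b-a,u}/\|a-b\|$ and $\ang{a-b,v}/\|a-b\|$ is bounded away from $1$ uniformly in $u\in N_A(a)\cap\Sp$, $v\in N_B(b)\cap\Sp$. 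Passing from the ``for all $a,b$ exists $\gamma$'' form to the $\limsup$/$\liminf$ form is routine, and shows $\hat\theta_4>0\iff\hat c_4<1$, with the displayed inequality \eqref{DIL-1} being exactly the negated/supremum reformulation of \eqref{int_con}.

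For the identity $\hat c_4^2+\hat\theta_4^2=1$: apply $\max\{p,q\}$ versus $\min\{1-p^2,1-q^2\}$ — namely $\max\{d(w,N_A(a))^2,d(-w,N_B(b))^2\}=1-\min\bigl\{(\sup_u\ang{w,u})_+^2,(\sup_v\ang{-w,v})_+^2\bigr\}$ — take $\inf$ over $a,b$ on the left (which commutes with the outer $\lim_{\rho\downarrow0}$, both being monotone in $\rho$) and correspondingly $\sup$ on the right, square, and read off $\hat\theta_4^2=1-\hat c_4^2$ after checking that the positive-part truncation does not affect the value: either $\hat c_4\ge 0$ (the generic case, where the truncation is inert), or $\hat c_4<0$ which forces $\hat\theta_4=1$ and one verifies $\hat c_4$ must then equal... here one must double-check the sign conventions and the case where one set is locally flat, matching Remark~2 after Proposition~\ref{P3} on the range of such constants. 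I would close by remarking that the supremum/infimum over empty index sets follows the paper's stated conventions ($-\infty$, $+\infty$), which is why the $\limsup$ in \eqref{DIL-1} and the $\liminf$ in \eqref{P19-1} are finite under the standing local geometry of $\bx$.
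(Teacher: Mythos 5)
The paper states Proposition~\ref{DIL} without proof, so there is nothing to compare against line by line; judged on its own, your plan is the natural argument and is essentially correct. Your key lemma is right: for a unit vector $w$ and a nonzero closed convex cone $N$ (and $N_A(a)$, $N_B(b)$ are closed convex cones in a Hilbert space), writing $p$ for the projection of $w$ onto $N$ one has $\ang{w-p,p}=0$ and $\ang{w-p,q}\le 0$ for all $q\in N$, whence $d(w,N)^2=1-\|p\|^2$ and $\|p\|=\bigl(\sup\{\ang{w,u}\mid u\in N\cap\Sp\}\bigr)_+$; the case $N=\{0\}$ is covered by the empty-supremum convention. Combined with the elementary interchange $\sup_{u,v}\min\{f(u),g(v)\}=\min\{\sup_u f,\sup_v g\}$ and the monotonicity of the inner $\inf$/$\sup$ in $\rho$, this gives the equivalence cleanly: since $x_+<1$ if and only if $x<1$, the truncation is harmless there, and $\hat\theta_4>0\iff\hat c_4<1$ follows.

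The one genuine loose end is exactly where you trail off: what your computation actually yields is $\hat\theta_4^2+\bigl((\hat c_4)_+\bigr)^2=1$, not the stated identity. To remove the truncation you must argue $\hat c_4\ge 0$, and this can fail: if, say, every $a\in A\setminus B$ near $\bx$ has $N_A(a)=\{0\}$ (e.g.\ $A=X$ and $B$ a proper closed set, or more extremely $A=B$ so that the index set itself is empty), then $\hat c_4=-\infty$ while $\hat\theta_4\ge 1$, and the ``Moreover'' identity as printed is false. So you have in fact located a gap in the \emph{statement} rather than in your proof; it is the same degeneracy the authors guard against in Proposition~\ref{P3}(iii) by assuming $\bx\in\bd A\cap\bd B$ or $\bx\in\Int(A\cap B)$, and an analogous nondegeneracy hypothesis (nontrivial normals at points of $A\setminus B$ and $B\setminus A$ arbitrarily close to $\bx$, with $\hat c_4\ge0$) should be made explicit before asserting $\hat c_4^2+\hat\theta_4^2=1$. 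I would finish the write-up by stating the identity in the truncated form and adding that remark, rather than leaving the sign discussion unresolved.
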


\begin{remark}
1. If $\dim X<\infty$, then, as usual, the Fr\'echet normals in \eqref{P19-1} and \eqref{DIL-1} can be replaced by the proximal ones:
\begin{multline*}
\hat{\theta}_{4}[A,B](\bx)
=\lim_{\rho\downarrow 0}\inf_{\substack{a\in(A\setminus B)\cap \B_{\rho}(\bx),\,b\in(B\setminus A)\cap \B_{\rho}(\bx)}}
\\
\max\biggl\{d\left(\frac{b-a}{\|a-b\|}, N_{A}^{\prox}(a)\right),
d\left(\frac{a-b}{\|a-b\|}, N_{B}^{\prox}(b)\right)\biggr\},
\end{multline*}
\begin{multline*}
\hat{c}_{4}[A,B](\bx)
=\lim_{\rho\downarrow 0}\sup \Big\{\frac{\min\{\ang{b-a,u},\ang{a-b,v}\}} {\|a-b\|}\mid u\in N_{A}^{\prox}(a)\cap\Sp,
\\
v\in N_{B}^{\prox}(b)\cap\Sp,\;
a\in(A\setminus B)\cap \B_{\rho}(\bx),\;b\in(B\setminus A)\cap \B_{\rho}(\bx)\Big\}.
\end{multline*}

2. In \cite{DruIofLew14}, the property in Definition~\ref{P19} is referred to as \emph{intrinsic
transversality}.
\end{remark}

The next two examples show that DIL-restricted regularity is in general independent of BLPW-DIL-restricted regularity.

\begin{example}
[BLPW-DIL-restricted regularity but not DIL-restricted re\-gularity; Figure~\ref{INH_VS_INT}]
\label{inh_not_int}
Define a function $f:[0,1]\to \R$ by
\begin{equation*}
f(t):=
\left\{
  \begin{array}{ll}
    0, & \mbox{if } t=0,\\
    -t+1/2^{n+1}, & \mbox{if } t\in (1/2^{n+1},3/2^{n+2}],\\
    t-1/2^n, & \mbox{if } t\in (3/2^{n+2},1/2^{n}],\; n=0,1,\ldots
  \end{array}
\right.
\end{equation*}
and consider the sets: $A=\gr f$ and $B=\{(t,t): t\in [0,1]\}$ and the point $\bx=(0,0)=A\cap B$ in $\R^2$.
Suppose $\R^2$ is equipped with the Euclidean norm.

\begin{figure}[!ht]
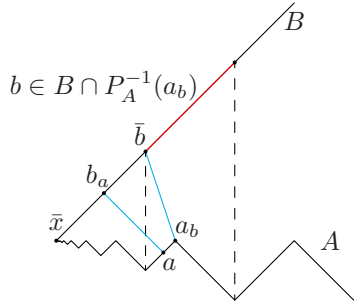

\begin{center}
\mfpic[45]{0}{2.5}{-.5}{2}
  \draw[black]\polyline{(2.5,-.5),(2,0),(1.5,-.5),(1,0),(.75,-.25),(.5,0),(.375,-.125),(.25,0),(.1875,-.0625),(.125,0),
  (.09375,-.03125),(.0625,0),(.046875,-.015625),(.03125,0),(.0234375,-.0078125),(.015625,0),(.01171875,-.00390625),(.0078125,0),(0,0),(2,2)}
  \dashed\polyline{(1.5,-.5),(1.5,1.5)}
  \dashed\polyline{(.75,-.25),(.75,.75)}
  \draw[cyan]\polyline{(.4,.4),(.9,-.1)}
  \draw[cyan]\polyline{(1,0),(.75,.75)}
  \draw[red]\polyline{(1.5,1.5),(.75,.75)}
  \tlabel[bc](.35,.45){$b_a$}
  \tlabel[bc](.7,.8){$\bar b$}
  \tlabel[cc](1.1,.1){$a_b$}
  \tlabel[tc](.95,-.15){$a$}
  \tlabel[cc](2.3,0){$A$}
  \tlabel[cc](2,1.85){$B$}
  \tlabel[cc](.4,1.3){$b\in B\cap P_A^{-1}(a_b)$}
  \tlabel[cc](0,.15){$\bx$}
  \pointcolor{black}
  \point[1.5pt]{(0,0),(.9,-.1),(1,0),(.75,.75),(.4,.4),(1.5,1.5)}
\endmfpic
\end{center}
\caption{BLPW-DIL-restricted regularity but not DIL-restricted regularity} \label{INH_VS_INT}
\end{figure}

It is easy to check that $f$ is a continuous function and consequently $A$ is closed; $f(1/2^n)=0$, $f(3/2^{n+2})=-1/2^{n+2}$, $n=0,1,\ldots$

Take any $a\in A\setminus B$, $b\in B\setminus A$, $b_a\in P_B(a)$, and $a_b\in P_A(b)$.
Thanks to the properties of the Euclidean distance, we have
\begin{gather*}
a_b=(1/2^n,0),\\
b\in B\cap P_A^{-1}(a_b)=\{(t,t)\mid t\in [3/2^{n+2},3/2^{n+1}]\},\\
a-b_a=k(1,-1)
\end{gather*}
for some $n\in\N$ and $k>0$.
Then,
\begin{align*}
\hat{c}_{2}[A,B](\bx)
&=\max_{b\in B\cap P_A^{-1}(a_b)} \set{\frac{\ang{(-1,1),b-a_b}} {\sqrt{2}\|b-a_b\|}}= \frac{1}{\sqrt{2}}\ang{(-1,1),\frac{\bar b-a_b}{\|\bar b-a_b\|}},
\end{align*}
where $\bar b:=(3/2^{n+2},3/2^{n+2})$, and consequently,
\begin{align*}
\hat{c}_{2}[A,B](\bx)
&=\frac{\ang{(-1,1),(-1,3)}}{\sqrt{2}\,\sqrt{10}} =\frac{2}{\sqrt{5}}>0.
\end{align*}
Hence, $\{A,B\}$ is BLPW-DIL-restrictedly regular at $\bx$.

Given an $n\in\N$, we choose $a:=(1/2^n,0)\in A\setminus B$ and $b:=(1/2^{n+1},1/2^{n+1}) \in B\setminus A$.
Then,
\begin{align*}
N_A^{\prox}(a)=&N_A(a)=\{(t_1,t_2):t_2\ge|t_1|\},\\
N_B^{\prox}(b)=&N_B(b)=\R(1,-1),
\end{align*}
and consequently,
\begin{equation*}
a-b=1/2^{n+1}(1,-1)\in N_B(b)\cap -N_A(a).
\end{equation*}
It follows that $\hat{c}_{4}[A,B](\bx)=1$ and $\{A,B\}$ is not DIL-restrictedly regular at $\bx$.
\qedtr\end{example}

\begin{example}
[DIL-restricted regularity but not BLPW-DIL-restricted re\-gularity; Figure~\ref{INT_VS_INH}]
\label{int_not_inh}
Consider two sets:
\begin{align*}
A=&\{(t,0): t\ge 0\}\cup \{(t,-t): t\ge 0\},\\
B=&\{(t,0): t\ge 0\}\cup \{(t,t): t\ge 0\}
\end{align*}
and the point $\bx=(0,0)\in A\cap B$ in $\R^2$.
Suppose $\R^2$ is equipped with the Euclidean norm.

\begin{figure}[!ht]
\begin{center}
\mfpic[45]{0}{2}{-1}{1}
  \draw[black]\polyline{(1,1),(0,0),(1.5,0)}
  \draw[black]\polyline{(0,0),(1,-1)}
  \draw[cyan]\polyline{(.7,.7),(.7,0)}
  \draw[cyan]\polyline{(.5,0),(.5,-.5)}
  \tlabel[tc](.5,-.6){$a$}
  \tlabel[cc](.7,-.1){$a_b$}
  \tlabel[cc](.5,.15){$b_a$}
  \tlabel[cc](.7,.85){$b$}
  \tlabel[cc](1,-.75){$A$}
  \tlabel[cc](1,.75){$B$}
  \tlabel[cc](1.2,.15){$A\cap B$}
  \tlabel[cc](0,.15){$\bx$}
  \pointcolor{black}
  \point[1.5pt]{(0,0),(.5,0),(.7,.7),(.5,-.5),(.7,0)}
\endmfpic
\end{center}
\caption{DIL-restricted regularity but not BLPW-DIL-restricted re\-gularity}\label{INT_VS_INH}
\end{figure}

For any $a=(t_1,-t_1)\in A\setminus B$ and $b=(t_2,t_2)\in B\setminus A$, we have
\begin{align*}
N_A^{\prox}(a)=&N_A(a)=\R(1,1),\\
N_B^{\prox}(b)=&N_B(b)=\R(1,-1).
\end{align*}
and consequently,
\begin{align*}
\hat{c}_{4}[A,B](\bx)
=\sup_{t_1>0,\,t_2>0} \frac{\min\{t_1,t_2\}} {\sqrt{t_1^2+t_2^2}}=\frac{1} {\sqrt{2}}<1.
\end{align*}
Hence, $\{A,B\}$ is DIL-restrictedly regular at $\bx$.

For any $a\in A\setminus B$, $b\in B\setminus A$, $b_a\in P_B(a)$, and $a_b\in P_A(b)$, we have
\begin{align*}
\frac{b-a_b} {\|b-a_b\|}= \frac{b_a-a} {\|a-b_a\|}.
\end{align*}
It follows that $\hat{c}_{2}[A,B](\bx)=1$ and $\{A,B\}$ is not BLPW-DIL-restrictedly regular at $\bx$.
\qedtr\end{example}

The next fact was established in
\cite[Proposition 4.5]{DruIofLew14}.

\begin{proposition}\label{inh_int_tra}
If $\dim X<\infty$, $\{A,B\}$ is BLPW-DIL-restrictedly regular at $\bx$, and both sets $A$ and $B$ are super-regular at $\bx$, then $\{A,B\}$ is DIL-restrictedly regular at $\bx$.
\end{proposition}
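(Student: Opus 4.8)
The plan is to argue by contraposition: assuming $\{A,B\}$ fails to be DIL-restrictedly regular at $\bx$, I will deduce that $\hat{c}_2[A,B](\bx)=1$, so BLPW-DIL-restricted regularity fails as well. By Proposition~\ref{DIL}, the negation of DIL-restricted regularity means $\hat{c}_4[A,B](\bx)=1$ (the inequality $\hat{c}_4\le 1$ being automatic). Unravelling the $\lim_{\rho\downarrow0}\sup$ and diagonalizing produces sequences $a_k\in A\setminus B$ and $b_k\in B\setminus A$ with $a_k\to\bx$, $b_k\to\bx$, together with unit vectors $u_k\in N_A(a_k)$ and $v_k\in N_B(b_k)$ such that, putting $R_k:=\|a_k-b_k\|>0$ and $e_k:=(b_k-a_k)/R_k\in\Sp$, we have $\ang{e_k,u_k}\to1$ and $\ang{-e_k,v_k}\to1$, equivalently $\|u_k-e_k\|\to0$ and $\|v_k+e_k\|\to0$. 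So, up to vanishing errors, $u_k$ is the unit normal of $A$ at $a_k$ aimed at $b_k$, and $v_k$ the unit normal of $B$ at $b_k$ aimed at $a_k$.

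The heart of the argument is a quantitative use of super-regularity in the spirit of \cite{LewLukMal09}. I would fix $\gamma>0$ and choose $\delta>0$ valid in Definition~\ref{sup_reg}(ii) for both $A$ and $B$ simultaneously. Since $\dim X<\infty$, pick $\hat{b}_k\in P_B(a_k)$ and $\hat{a}_k\in P_A(b_k)$; because $d(a_k,B)\le R_k\to0$ and $d(b_k,A)\le R_k\to0$, these nearest points lie in $\B_\delta(\bx)$ for large $k$, so the super-regularity inequality applies to them. Expanding $\|a_k-\hat{b}_k\|^2=\|(a_k-b_k)-(\hat{b}_k-b_k)\|^2$, writing $a_k-b_k=R_kv_k-R_k(e_k+v_k)$, and using $\ang{v_k,\hat{b}_k-b_k}\le\gamma\|\hat{b}_k-b_k\|$ together with $\|a_k-\hat{b}_k\|=d(a_k,B)\le R_k$, one gets $\|\hat{b}_k-b_k\|\le 2R_k\eta_k'$ with $\eta_k':=\gamma+\|e_k+v_k\|$. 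The mirror computation for $A$ gives $\|\hat{a}_k-a_k\|\le 2R_k\eta_k$ with $\eta_k:=\gamma+\|e_k-u_k\|$. In words: when both sets are super-regular, the projection of $a_k$ onto $B$ cannot drift away from $b_k$, nor that of $b_k$ onto $A$ away from $a_k$ — both stay within $o(R_k)$.

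It then remains to feed the quadruple $(a_k,\hat{b}_k,b_k,\hat{a}_k)$ into the quotient defining $\hat{c}_2$. It is admissible in Definition~\ref{BLPW2} for large $k$: indeed $a_k\in(A\setminus B)\cap\B_\rho(\bx)$, $b_k\in(B\setminus A)\cap\B_\rho(\bx)$, $\hat{b}_k\in P_B(a_k)$, $\hat{a}_k\in P_A(b_k)$, and $a_k\ne\hat{b}_k$, $b_k\ne\hat{a}_k$ since $a_k\notin B\ni\hat{b}_k$ and $b_k\notin A\ni\hat{a}_k$. Writing $a_k-\hat{b}_k=-R_ke_k+\sigma_k$ and $b_k-\hat{a}_k=R_ke_k+\tau_k$ with $\|\sigma_k\|\le 2R_k\eta_k'$ and $\|\tau_k\|\le 2R_k\eta_k$, an elementary expansion and the triangle inequality give
\begin{equation*}
-\frac{\ang{a_k-\hat{b}_k,\,b_k-\hat{a}_k}}{\|a_k-\hat{b}_k\|\,\|b_k-\hat{a}_k\|}\ \ge\ \frac{1-2\eta_k-2\eta_k'-4\eta_k\eta_k'}{(1+2\eta_k)(1+2\eta_k')}.
\end{equation*}
Letting $k\to\infty$ (so $\|u_k-e_k\|,\|v_k+e_k\|\to0$, hence $\eta_k,\eta_k'\to\gamma$) and then $\gamma\downarrow0$, the right-hand side tends to $1$; since the left-hand side never exceeds $1$, this forces $\hat{c}_2[A,B](\bx)=1$, i.e.\ $\{A,B\}$ is not BLPW-DIL-restrictedly regular at $\bx$ — the desired contradiction.

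The only genuinely delicate points are internal to the second step. Super-regularity is a \emph{local} property, so one must first verify that $\hat{a}_k,\hat{b}_k$ remain in the neighbourhood where Definition~\ref{sup_reg}(ii) holds before invoking it; and one must keep the error term $\eta_k$ split into the fixed part $\gamma$ and the vanishing part $\|e_k-u_k\|$, so that the two limits ($k\to\infty$ first, then $\gamma\downarrow0$) are taken in the right order. Everything else is routine Hilbert-space algebra; finite-dimensionality of $X$ is used only to guarantee that the metric projections $P_A(b_k)$ and $P_B(a_k)$ are nonempty (which is also what makes Definition~\ref{BLPW2} meaningful).
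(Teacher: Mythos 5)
Your argument is correct. The paper itself gives no proof of Proposition~\ref{inh_int_tra} --- it only cites \cite[Proposition~4.5]{DruIofLew14} --- and your contrapositive argument (extract from the failure of $\hat c_4[A,B](\bx)<1$ sequences with almost-aligned unit normals $u_k\approx e_k$, $v_k\approx -e_k$, use super-regularity of \emph{both} sets to pin the nearest points $\hat a_k\in P_A(b_k)$, $\hat b_k\in P_B(a_k)$ within $2R_k(\gamma+o(1))$ of $a_k$, $b_k$, and feed the resulting quadruple into the quotient defining $\hat c_2$, taking $k\to\infty$ before $\gamma\downarrow0$) is essentially the argument of that reference, with the limits handled in the correct order.
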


\begin{remark}
The assumption of super-regularity of both sets in Proposition \ref{inh_int_tra} is essential.
Indeed, in Example~\ref{inh_not_int}, $\{A,B\}$ is BLPW-DIL-res\-trictedly regular and $B$ is super-regular (in fact, convex), while $A$ is not and $\{A,B\}$ is not DIL-restrictedly regular.
\end{remark}

\subsection{NR-restricted regularity}
The next step in relaxing both BLPW- and DIL-restricted regularity properties while preserving the linear convergence of alternating projections has been done in Noll and Rondepierre \cite{NolRon}.
In what follows, the resulting property is called \emph{NR-restricted regularity}.

\begin{definition}\label{NR}
A collection of closed sets $\{A,B\}$ is NR-restrictedly regular at $\bx\in A\cap B$ if
\begin{multline*}
\hat{c}_{3}[A,B](\bx)
:=\lim_{\rho\downarrow 0}\sup \Big\{\frac{\ang{a_1-b,a_2-b}} {\|a_1-b\|\,\|a_2-b\|}\mid
a_1\in A,\,b\in P_B(a_1),\,a_2\in P_A(b)
\\
a_1,b,a_2\in \B_{\rho}(\bx),
\Big\}<1,
\end{multline*}
i.e., there exist numbers $\alpha<1$ and $\delta>0$ such that
$$
\ang{a_1-b,a_2-b}\le \al\|a_1-b\|\,\|a_2-b\|
$$
for all $a_1\in A\cap \B_{\delta}(\bx)$, $b\in P_B(a_1)\cap \B_{\delta}(\bx)$, and $a_2\in P_A(b)\cap \B_{\delta}(\bx)$.
\end{definition}

\begin{remark}
1. NR-restricted regularity property is not symmetric: NR-restric\-ted regularity of $\{A,B\}$ does not imply that $\{B,A\}$ is NR-restrictedly regular.

2. If $\{A,B\}$ is BLPW- or DIL-restrictedly regular at $\bx$, then it is NR-res\-trictedly regular at $\bx$ and the second implication can be strict \cite[Propositions~1 and 2 and Example 7.6]{NolRon}.
In fact, it is easy to check that NR-restricted regularity is implied by BLPW-DIL-restricted regularity.
Example~\ref{int_not_inh} shows that NR-res\-tricted regularity can be strictly weaker.

3. Theorem \ref{BLPW-T} remains valid if the assumption of BLPW-restricted regularity is replaced by that of NR-restricted regularity and $\bar{c}_1$ is replaced by $\hat{c}_{3}[A,B](\bx)$.

4. The property in Definition~\ref{NR} is referred to in \cite{NolRon} as \emph{separable intersection}.

5. In \cite{NolRon}, a more general H\"older-type property with exponent $\omega\in [0,2)$ is considered.
Definition \ref{NR} corresponds to that property with $\omega=0$.
For the convergence analysis, the authors of \cite{NolRon} introduce also a H\"older version of the superregularity property.
\end{remark}


\section{Convergence for inexact alternating projections}\label{S3}

In this section, we study two settings of inexact alternating projections under the assumptions of DIL-restricted regularity and uniform regularity, respectively, and establish the corresponding convergence estimates.

\subsection{Convergence for inexact alternating projections under DIL-restricted regularity}
Given a point $x$ and a set $A$ in a Hilbert space and numbers $\tau\in (0,1]$ and $\sigma\in[0,1)$, the \emph{$(\tau,\sigma)$-projection} of $x$ on $A$ is defined as follows:
\begin{gather}\label{ab}
P_{A}^{\tau,\sigma}(x):= \left\{a\in A \mid \tau \|x-a\|\le d(x,A),\; d\left(x-a,N_A(a)\right)\le \sigma\|x-a\|\right\}.
\end{gather}
One obviously has $P_{A}^{1,\sigma}(x)=P_{A}(x)$ for any $\sigma\in [0,1)$.
Observe also that the above definition requires $a$ to be an ``almost projection'' in terms of the distance $\|x-a\|$ being close to $d(x,A)$ and also $x-a$ being an ``almost normal'' to $A$.

\begin{definition}[Inexact alternating projections]\label{ine_alt_pro}
Given $\tau\in(0,1]$ and $\sigma\in[0,1)$,
$\{x_n\}$ is a sequence of $(\tau,\sigma)$-alternating projections for $\{A,B\}$ if
\begin{gather*}
x_{2n+1}\in P_B^{\tau,\sigma}(x_{2n})
\quad\mbox{and}\quad
x_{2n+2}\in P_A^{\tau,\sigma}(x_{2n+1})
\quad
(n=0,1,\ldots).
\end{gather*}
\end{definition}

The next statement is taken from \cite[Theorem~5.3]{DruIofLew14} where it is formulated in the setting of a finite dimensional Euclidean space.
It is a version of the general metric space \emph{Basic Lemma} from \cite{Iof00_}.
Recall that the (strong) \emph{slope} \cite{DMT} of $f$ at a point $u\in X$ with $f(u)<+\infty$ is defined as follows:
$$
|\nabla f|(u):=\limsup_{u'\to u,u'\neq u}\frac{f(u)-f(u')}{d(u',u)}.
$$

\begin{lemma}[Error bound]\label{new_err_bou}
Let $X$ be a complete metric space, $f:X\to\R\cup \{+\infty\}$ a lower semicontinuous  function, $x\in X$ with $f(x)<+\infty$, $\delta>0$, and $\alpha<f(x)$.
Suppose that
\begin{equation}\label{mu}
\mu:=\inf_{u\in\B_\delta(x),\,\alpha<f(u)\le f(x)}|\nabla f|(u)>\frac{f(x)-\alpha}{\delta}.
\end{equation}
Then
$S(f,\alpha):=\set{u\in X\mid f(u)\le\alpha}\neq \emptyset$
and
$$\mu d(x,S(f,\alpha))\le f(x)-\alpha.$$
\end{lemma}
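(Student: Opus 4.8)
The plan is to deduce this from Ekeland's variational principle applied to the truncated function $g:=\max\{f,\alpha\}$. Truncating is harmless because $g$ is again lower semicontinuous, but it is now bounded below (by $\alpha$), so Ekeland's principle applies with no coercivity assumption on $f$; moreover $g(x)=f(x)$ since $f(x)>\alpha$, and $\inf_X g\ge\alpha$, so $g(x)\le\inf_X g+\eps$ with $\eps:=f(x)-\alpha>0$.

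First I would fix a parameter $\gamma$ with $\eps/\mu<\gamma<\delta$; such a $\gamma$ exists precisely because hypothesis \eqref{mu} says $\mu>\eps/\delta$ (when $\mu=+\infty$ read $\eps/\mu=0$). Ekeland's principle then produces $\bar x\in X$ with $g(\bar x)\le g(x)$, $d(\bar x,x)\le\gamma$, and $g(u)>g(\bar x)-(\eps/\gamma)\,d(u,\bar x)$ for all $u\ne\bar x$. The crux is to show $g(\bar x)=\alpha$, i.e. $f(\bar x)\le\alpha$. If instead $g(\bar x)>\alpha$, then $g(\bar x)=f(\bar x)$ and $\alpha<f(\bar x)\le f(x)<+\infty$; since $d(\bar x,x)\le\gamma<\delta$ we have $\bar x\in\B_\delta(x)$, so $\bar x$ lies in the index set of the infimum in \eqref{mu} and therefore $|\nabla f|(\bar x)\ge\mu$. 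On the other hand, lower semicontinuity of $f$ at $\bar x$ furnishes a ball $\B_\eta(\bar x)$ on which $f>\alpha$, hence $g=f$; inserting this into the Ekeland inequality gives $f(\bar x)-f(u)<(\eps/\gamma)\,d(u,\bar x)$ on $\B_\eta(\bar x)\setminus\{\bar x\}$, whence $|\nabla f|(\bar x)\le\eps/\gamma<\mu$ --- a contradiction. So $f(\bar x)\le\alpha$, i.e. $\bar x\in S(f,\alpha)$; in particular $S(f,\alpha)\ne\emptyset$ and $d(x,S(f,\alpha))\le d(x,\bar x)\le\gamma$. Since $\gamma\in(\eps/\mu,\delta)$ was arbitrary, letting $\gamma\downarrow\eps/\mu$ yields $d(x,S(f,\alpha))\le\eps/\mu$, that is, $\mu\,d(x,S(f,\alpha))\le f(x)-\alpha$.

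I expect the main obstacle to be the bookkeeping with the three quantities $\eps/\mu$, $\gamma$, $\delta$: the Ekeland parameter $\gamma$ must be large enough that the minimality slope $\eps/\gamma$ drops below $\mu$, yet small enough that the Ekeland point $\bar x$ remains inside $\B_\delta(x)$, where the bound $|\nabla f|\ge\mu$ is available --- and the compatibility of these two requirements is exactly the content of \eqref{mu}. The two auxiliary moves, truncating to $g=\max\{f,\alpha\}$ to obtain boundedness below, and using lower semicontinuity of $f$ to convert the Ekeland inequality for $g$ into an upper estimate for $|\nabla f|(\bar x)$ on a small ball about $\bar x$, are routine once this is arranged.
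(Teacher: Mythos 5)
Your proof is correct. The paper itself gives no proof of this lemma, citing it as \cite[Theorem~5.3]{DruIofLew14} (a version of Ioffe's metric-space Basic Lemma), and your argument --- Ekeland's variational principle applied to the truncation $g=\max\{f,\alpha\}$ with parameter $\gamma\in(\varepsilon/\mu,\delta)$, plus lower semicontinuity to pass from the Ekeland inequality for $g$ to the slope bound $|\nabla f|(\bar x)\le\varepsilon/\gamma<\mu$ --- is exactly the standard proof of that cited result, with the bookkeeping handled correctly.
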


If $X$ is an Asplund space, then a standard argument based on the \emph{subdifferential sum rule} (cf., e.g., \cite[Proposition~5(ii)]{FabHenKruOut10} or \cite[Proposition~6.9]{DruIofLew14}) shows that the primal space slopes in the definition of $\mu$ in \eqref{mu} can be replaced by the subdifferential slopes:
\begin{equation}\label{mu2}
\mu=\inf_{u\in\B_\delta(x),\,\alpha<f(u)\le f(x)}|\sd f|(u).
\end{equation}
Here
$$|\sd f|(u):=\inf_{x^*\in\sd f(u)}\|x^*\|_*,$$
where $\sd f(u)$ is the Fr\'echet subdifferential of $f$ at $u$ and $\|\cdot\|_*$ is the norm on $X^*$ dual to the norm on $X$ participating in the definition of the primal space slope.
Note that in general $|\nabla f|(u)\le|\sd f|(u)$.

If $X$ is a finite dimensional Euclidean space, then, instead of the Fr\'echet subdifferentials, one can use the \emph{proximal} subdifferentials $\sd^{\prox} f(u)$:
\begin{equation}\label{mu5}
\mu=\inf_{u\in\B_\delta(x),\,\alpha<f(u)\le f(x)}|\sd^{\prox} f|(u),
\end{equation}
where
$$|\sd^{\prox} f|(u):=\inf_{x^*\in\sd^{\prox} f(u)}\|x^*\|.$$

The next statement is a consequence of Lemma~\ref{new_err_bou}.
It extends slightly \cite[Theorem~3.1]{DruIofLew14}.
\begin{proposition}[Distance decrease]\label{dis_dec}
Let $A$ be a closed subset of a Hilbert space $X$, $a\in A$, $b\notin A$, $\delta>0$, and $\alpha<\|a-b\|$.
Suppose that
\begin{gather}\label{mu3}
\mu:=\inf_{\substack{u\in A\cap\B_\delta(a)\\\|u-b\|\le\|a-b\|}}d\left(\frac{b-u}{\|u-b\|},N_A(u)\right)>0.
\end{gather}
Then $d(b,A)\le\|a-b\|-\mu \delta$.

If $\dim X<\infty$, then the Fr\'echet normal cones $N_A(u)$ in \eqref{mu3} can be replaced by the proximal ones $N^{\prox}_A(u)$.
\end{proposition}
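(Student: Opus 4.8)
The plan is to apply the Error Bound Lemma~\ref{new_err_bou} to the function $f\colon X\to\R\cup\{+\infty\}$ given by $f(u):=\|u-b\|$ for $u\in A$ and $f(u):=+\infty$ for $u\notin A$ (i.e., the sum of $\|\cdot-b\|$ and the indicator function of $A$), with base point $x:=a$. Since $X$ is a Hilbert space it is complete and Asplund; $A$ closed makes $f$ lower semicontinuous; and $f(a)=\|a-b\|<+\infty$. The sublevel set is $S(f,\gamma)=\{u\in A\mid\|u-b\|\le\gamma\}$, so as soon as $S(f,\gamma)\neq\emptyset$ we obtain $d(b,A)\le\gamma$. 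Hence it will be enough to check the hypotheses of Lemma~\ref{new_err_bou} with $\alpha$ taken to be an arbitrary $\gamma\in(\|a-b\|-\mu\delta,\|a-b\|)$ (this interval is nonempty because $\mu\delta>0$), and then let $\gamma\downarrow\|a-b\|-\mu\delta$.

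The key point is to identify the slope entering Lemma~\ref{new_err_bou} with the quantity in \eqref{mu3}. For $u\in A$ one has $u\neq b$ (as $b\notin A$), so $\|\cdot-b\|$ is differentiable at $u$ with gradient $(u-b)/\|u-b\|$; the exact Fr\'echet subdifferential sum rule then gives $\sd f(u)=(u-b)/\|u-b\|+N_A(u)$, whence
\[
|\sd f|(u)=\inf_{w\in N_A(u)}\Bigl\|\tfrac{u-b}{\|u-b\|}+w\Bigr\|=d\Bigl(\tfrac{b-u}{\|u-b\|},N_A(u)\Bigr)\qquad(u\in A).
\]
By \eqref{mu2}, the quantity in condition \eqref{mu} of Lemma~\ref{new_err_bou} may be computed with $|\sd f|$ in place of $|\nabla f|$, so (with our $\gamma$ in the role of $\alpha$) it equals
\[
\inf_{\substack{u\in\B_\delta(a)\\\gamma<f(u)\le f(a)}}|\sd f|(u)=\inf_{\substack{u\in A\cap\B_\delta(a)\\\gamma<\|u-b\|\le\|a-b\|}}d\Bigl(\tfrac{b-u}{\|u-b\|},N_A(u)\Bigr).
\]

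The index set of this last infimum is contained in the one defining the constant $\mu$ of \eqref{mu3}, so the infimum is $\ge\mu$, and by the choice of $\gamma$ it is $>(\|a-b\|-\gamma)/\delta=(f(a)-\gamma)/\delta$. Thus, since also $\gamma<f(a)$, Lemma~\ref{new_err_bou} applies (with $x:=a$, the given $\delta$, and $\alpha:=\gamma$) and yields $S(f,\gamma)\neq\emptyset$, hence $d(b,A)\le\gamma$; letting $\gamma\downarrow\|a-b\|-\mu\delta$ finishes the proof. For the finite-dimensional statement the argument is verbatim, with \eqref{mu5} used instead of \eqref{mu2} and the proximal subdifferential sum rule instead of the Fr\'echet one (still exact, since $\|\cdot-b\|$ is $C^\infty$ away from $b$), giving $|\sd^{\prox}f|(u)=d((b-u)/\|u-b\|,N_A^{\prox}(u))$.

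The step I expect to be the main obstacle is getting the slope estimate in the right direction. The primal slope only satisfies $|\nabla f|(u)\le d((b-u)/\|u-b\|,N_A(u))$ in general (the contingent cone of $A$ at $u$ need not be convex, unlike its polar $N_A(u)$), so arguing directly with $|\nabla f|$ would yield an inequality the wrong way around; routing through $|\sd f|$ via \eqref{mu2}, where the sum rule is an exact identity, is what makes the estimate usable. A secondary point of care is the strict inequality in \eqref{mu}: this forces the detour through arbitrary $\gamma>\|a-b\|-\mu\delta$ followed by a passage to the limit rather than a direct application with $\gamma=\|a-b\|-\mu\delta$.
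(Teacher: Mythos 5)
Your proof is correct and follows essentially the same route as the paper's: the same function $f=\|\cdot-b\|+\iota_A$, the same identification $|\sd f|(u)=d\bigl(\tfrac{b-u}{\|u-b\|},N_A(u)\bigr)$ via the exact sum rule, the same appeal to Lemma~\ref{new_err_bou} through representation \eqref{mu2} (resp.\ \eqref{mu5}) for each $\alpha\in(\|a-b\|-\mu\delta,\|a-b\|)$ followed by passage to the limit. Your closing remarks on why one must route through the subdifferential slope rather than the primal slope, and on the role of the strict inequality in \eqref{mu}, correctly articulate points the paper leaves implicit.
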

\begin{proof}
Consider the lower semicontinuous function $f=d(\cdot,b)+\iota_{A}$,
where $\iota_{A}$ is the \emph{indicator} function of $A$: $\iota_{A}(x)=0$ if $x\in A$ and $\iota_{A}(x)=+\infty$ if $x\notin A$.
Then $f(a)=\|a-b\|$ and
$$\sd f(u)=\frac{u-b}{\|u-b\|}+N_A(u),\quad
|\sd f|(u)=d\left(\frac{b-u}{\|u-b\|},N_A(u)\right)$$
for any $u\in A$.
It follows from the first part of Lemma~\ref{new_err_bou} and representation \eqref{mu2} that $d(b,A)\le\alpha$ for any
$\alpha \in (\|a-b\|-\mu \delta,\|a-b\|)$ and consequently, $d(b,A)\le\|a-b\|-\mu \delta$.

If $\dim X<\infty$, then instead of representation \eqref{mu2} one can use representation \eqref{mu5}.
\end{proof}

The next statement is essentially \cite[Lemma~3.2]{DruIofLew14}.
\begin{lemma}\label{lem3.2}
Any nonzero vectors $x$ and $y$ in a Hilbert space satisfy
\begin{equation*}
\norm{\frac{x}{\|x\|}-z}\le \frac{\|x-y\|}{\|y\|},
\end{equation*}
where $z:=\ang{\frac{x}{\|x\|},\frac{y}{\|y\|}}\frac{y}{\|y\|}$ is the projection of $\frac{x}{\|x\|}$ on $\R y$.
\end{lemma}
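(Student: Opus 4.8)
The plan is to use the fact that $z$ is the metric (nearest-point) projection of the unit vector $x/\norm{x}$ onto the closed one-dimensional subspace $\R y$, and to compare this projection with one cleverly chosen competitor point of $\R y$.

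First I would normalise by writing $u:=x/\norm{x}$ and $e:=y/\norm{y}$, so that $z=\ang{u,e}e$ and $\R y=\R e$. Since $z$ minimises the distance to $u$ over all points of $\R y$, every $w\in\R y$ satisfies $\norm{u-z}\le\norm{u-w}$. The decisive step is to pick $w:=\dfrac{\norm{x}}{\norm{y}^2}\,y\in\R y$, for which a direct expansion of the square gives
\begin{equation*}
\norm{u-w}^2=\norm{\frac{x}{\norm{x}}-\frac{\norm{x}}{\norm{y}^2}y}^2 =1-\frac{2\ang{x,y}}{\norm{y}^2}+\frac{\norm{x}^2}{\norm{y}^2} =\frac{\norm{x}^2-2\ang{x,y}+\norm{y}^2}{\norm{y}^2} =\frac{\norm{x-y}^2}{\norm{y}^2}.
\end{equation*}
Combining the two inequalities then yields $\norm{u-z}\le\norm{x-y}/\norm{y}$, which is exactly the assertion.

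The only non-routine ingredient is guessing the competitor $w$; it is obtained by reverse-engineering, i.e.\ asking for which multiple $\lambda y$ of $y$ the distance $\norm{u-\lambda y}$ equals the prescribed right-hand side, which leads to a quadratic in $\lambda$ whose relevant root is $\norm{x}/\norm{y}^2$. I expect this to be the main (though minor) obstacle. A purely computational alternative sidesteps the guess: by Pythagoras (since $u-z\perp z$) one has $\norm{u-z}^2=1-\ang{u,e}^2$, and setting $r:=\norm{x}/\norm{y}$, $s:=\ang{u,e}$ reduces the claim to $1-s^2\le r^2-2rs+1$, i.e.\ to $0\le(r-s)^2$, which is trivially true; this version has the side benefit of displaying that equality holds precisely when $\ang{x,y}=\norm{x}^2$, that is, when $x$ coincides with the orthogonal projection of $y$ onto $\R x$. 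I would present the nearest-point argument as the proof and may add the computation as a one-line self-contained check.
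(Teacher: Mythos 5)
Your proposal is correct; both of your arguments go through. Your secondary ``one-line check'' is in fact exactly the paper's proof: the authors compute $\left(\norm{x-y}/\norm{y}\right)^2-\norm{x/\norm{x}-z}^2$, use the Pythagorean identity $\norm{x/\norm{x}-z}^2=1-\ang{x/\norm{x},y/\norm{y}}^2$, and exhibit the difference as the perfect square $\frac{1}{\norm{y}^2}\left(\norm{x}-\frac{\ang{x,y}}{\norm{x}}\right)^2$, which in your notation is precisely $(r-s)^2\ge 0$. Your primary argument is a genuinely different (if mild) variation: instead of completing the square, you invoke the best-approximation property of the metric projection onto the closed subspace $\R y$ and produce the explicit competitor $w=\frac{\norm{x}}{\norm{y}^2}y$, whose distance to $x/\norm{x}$ equals the right-hand side exactly. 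What this buys is a geometric explanation of where the bound comes from --- the right-hand side is the distance from $x/\norm{x}$ to a particular point of the line, so the inequality is forced by minimality --- at the cost of having to guess $w$; the paper's route is shorter and purely mechanical but leaves the bound unmotivated. Your observation on the equality case ($\ang{x,y}=\norm{x}^2$, i.e.\ $w=z$) is also correct and is not in the paper.
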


\begin{proof}
\begin{align*}
\left(\frac{\|x-y\|}{\|y\|}\right)^2-
\norm{\frac{x}{\|x\|}-z}^2
&=
\frac{\|x\|^2-2\langle x,y\rangle+\|y\|^2}{\|y\|^2}-
1+\ang{\frac{x}{\|x\|},\frac{y}{\|y\|}}^2
\\&=
\frac{1}{\|y\|^2}\left(\|x\|^2-2\langle x,y\rangle
+\frac{\langle x,y\rangle^2}{\|x\|^2}\right)
\\&=
\frac{1}{\|y\|^2}\left(\|x\|-\frac{\langle x,y\rangle}{\|x\|}\right)^2\ge0.
\end{align*}
\end{proof}

\begin{theorem}[Convergence of inexact alternating projections]\label{new_mai_res}
Suppose that $\{A,B\}$ is DIL-restrictedly regular at $\bx$,
$0\le\sigma<\hat{\theta}_{4}[A,B](\bx)$ and $0<\tau\le1$.
Then, for any $\gamma<\hat{\theta}_{4}[A,B](\bx)$ satisfying $0<\gamma-\sigma\le\tau$ and
$$c:=\tau\iv(1-\gamma^2+\gamma\sigma)<1,$$
any sequence of $(\tau,\sigma)$-alternating projections for $\{A,B\}$ with initial point sufficiently close to $\bx$ converges to a point in $A\cap B$ with $R-$linear rate $c$.
\end{theorem}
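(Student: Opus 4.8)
The plan is to reduce the convergence claim to a one-step contraction of the gaps $g_n:=\|x_{n+1}-x_n\|$ between consecutive iterates, and then to conclude by the usual telescoping/summation argument. Fix $\gamma,\sigma,\tau$ and $c=\tau^{-1}(1-\gamma^2+\gamma\sigma)<1$ as in the statement. Since $\gamma<\hat\theta_4[A,B](\bx)$, Definition~\ref{P19} provides a $\delta_0>0$ such that \eqref{int_con} holds with this $\gamma$ for all $a\in(A\setminus B)\cap\B_{\delta_0}(\bx)$ and $b\in(B\setminus A)\cap\B_{\delta_0}(\bx)$. Choose $r_0>0$ with $(1+2(\gamma-\sigma))r_0\le\delta_0$, and reserve an even smaller radius $r\in(0,r_0)$ for the admissible starting points, to be fixed in the last step.

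The heart of the argument is the following one-step estimate: if $a\in A\cap\B_{r_0}(\bx)$, $b\in P_B^{\tau,\sigma}(a)\cap\B_{r_0}(\bx)$ with $b\notin A$, and $a'\in P_A^{\tau,\sigma}(b)$, then $\|a'-b\|\le c\|a-b\|$; by the symmetry of DIL-restricted regularity and of the $(\tau,\sigma)$-projection in the two sets, the analogous statement holds with $A$ and $B$ interchanged. To prove it, set $\ell:=\|a-b\|>0$. From $b\in P_B^{\tau,\sigma}(a)$ we get $d\big(\tfrac{a-b}{\ell},N_B(b)\big)\le\sigma$ (the distance to a cone is positively homogeneous) and $d(a,A\cap B)\ge d(a,B)\ge\tau\ell$. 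Fix $\delta\in(0,(\gamma-\sigma)\ell)$; since $\gamma-\sigma\le\tau$ this gives $\delta<\tau\ell$, so every $u\in A\cap\B_\delta(a)$ lies in $A\setminus B$ and, by the choice of $r_0$, in $\B_{\delta_0}(\bx)$. For such a $u$ with $\|u-b\|\le\ell$ we have $\langle u-b,a-b\rangle\ge\ell(\ell-\delta)>0$, so the projection of $\tfrac{u-b}{\|u-b\|}$ onto $\R(a-b)$ is a nonnegative multiple of $a-b$; Lemma~\ref{lem3.2}, applied with $x=u-b$ and $y=a-b$, then yields $d\big(\tfrac{u-b}{\|u-b\|},\R(a-b)\big)\le\tfrac{\|u-a\|}{\ell}\le\tfrac{\delta}{\ell}$, and combining this with $d\big(\tfrac{a-b}{\ell},N_B(b)\big)\le\sigma$ and the cone property of $N_B(b)$ gives $d\big(\tfrac{u-b}{\|u-b\|},N_B(b)\big)\le\tfrac{\delta}{\ell}+\sigma<\gamma$. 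Hence \eqref{int_con} at $(u,b)$ forces $d\big(\tfrac{b-u}{\|u-b\|},N_A(u)\big)>\gamma$; as $u$ was arbitrary, the infimum in Proposition~\ref{dis_dec} is $\ge\gamma$, so that proposition gives $d(b,A)\le\ell-\gamma\delta$. Letting $\delta\uparrow(\gamma-\sigma)\ell$ we obtain $d(b,A)\le(1-\gamma^2+\gamma\sigma)\ell$, and since $\tau\|a'-b\|\le d(b,A)$ we conclude $\|a'-b\|\le\tau^{-1}(1-\gamma^2+\gamma\sigma)\ell=c\|a-b\|$.

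From here the convergence follows by a standard induction. If some iterate belongs to $A\cap B$, the distance governing the next $(\tau,\sigma)$-projection vanishes, so the sequence is eventually constant and equal to that point of $A\cap B$, and we are done. Otherwise, with $x_0$ within distance $r$ of $\bx$ (and $\bx\in A\cap B$), the first two gaps obey $g_0\le\tau^{-1}r$ and $g_1\le\tau^{-1}(g_0+r)$ directly from $\|a'-b\|\le\tau^{-1}d(b,\cdot)$, while the one-step estimate (in the orientation dictated by the parity) gives $g_{n+1}\le c\,g_n$ for $n\ge1$ as long as the three iterates involved remain in $\B_{r_0}(\bx)$. An induction then shows that, for $r$ small enough, $\|x_n-\bx\|\le\|x_0-\bx\|+\sum_{k<n}g_k$ never leaves $\B_{r_0}(\bx)$, so $g_n\le c^{n-1}g_1$ for all $n\ge1$ and $\sum_n g_n<\infty$. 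Thus $\{x_n\}$ is Cauchy; its limit $\bar x_\infty$ lies in $A\cap B$ because $x_{2n}\in A$, $x_{2n+1}\in B$ eventually and $A,B$ are closed; and $\|x_n-\bar x_\infty\|\le\sum_{k\ge n}g_k\le\frac{g_1}{1-c}\,c^{n-1}$, i.e.\ $R$-linear rate $c$.

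The step I expect to be the main obstacle is the \emph{sharp} constant in the one-step estimate. Perturbing \eqref{int_con} from the pair $(a,b)$ to a nearby pair $(u,b)$ and controlling $\big\|\tfrac{u-b}{\|u-b\|}-\tfrac{a-b}{\ell}\big\|$ by the elementary bound $2\|u-a\|/\ell$ would permit only a ball of radius about $(\gamma-\sigma)\ell/2$ in Proposition~\ref{dis_dec}, hence a strictly worse rate; Lemma~\ref{lem3.2} is precisely what removes this loss, since it measures the deviation of $\tfrac{u-b}{\|u-b\|}$ from the \emph{line} $\R(a-b)$ --- not from the point $\tfrac{a-b}{\ell}$ --- and returns $\|u-a\|/\ell$ with no spurious factor. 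The remaining points are bookkeeping: keeping the perturbed points inside $\B_{\delta_0}(\bx)$, and exploiting $\gamma-\sigma\le\tau$ to guarantee that $\B_\delta(a)$ avoids $A\cap B$, so that DIL-restricted regularity can be invoked there.
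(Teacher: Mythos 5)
Your proposal is correct and follows essentially the same route as the paper's proof: the one-step estimate $d(b,A)\le(1-\gamma^2+\gamma\sigma)\|a-b\|$ obtained by combining Lemma~\ref{lem3.2} (to control the deviation of $\frac{u-b}{\|u-b\|}$ from $\R(a-b)$ by $\|u-a\|/\|a-b\|$), the cone inequality $d\bigl(\frac{u-b}{\|u-b\|},N_B(b)\bigr)<\gamma$, condition \eqref{int_con}, and Proposition~\ref{dis_dec} with $\delta\approx(\gamma-\sigma)\|a-b\|$, followed by the same induction keeping the iterates in the working ball and the Cauchy-sequence conclusion. Your version is in fact slightly more careful on two minor points the paper leaves implicit (that the projection $z$ is a \emph{nonnegative} multiple of $a-b$ so the cone estimate applies, and that the bound on $\|x_2-x_1\|$ does not require $x_0\in A$), but these are refinements of the same argument, not a different one.
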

\begin{proof}
By Definition~\ref{P19}, there exists a $\rho>0$ such that condition \eqref{int_con} holds true
for all $a\in(A\setminus B)\cap \B_{\rho}(\bx)$ and $b\in(B\setminus A)\cap \B_{\rho}(\bx)$.

Let $a\in A\cap\B_{\rho'}(\bx)$ and $b\in P_B^{\tau,\sigma}(a)\cap\B_{\rho'}(\bx)$ where $\rho':=\rho/(1+2(\gamma-\sigma))$.
We are going to show that
$$
d(b,A)\le(1-\gamma^2+\gamma\sigma)\|b-a\|.
$$
If $b\in A$, the inequality holds true trivially.
Suppose $b\notin A$ and denote $\delta:=(\gamma-\sigma)\|b-a\|$.
Consider any point $u\in A\cap\B_{\delta}(a)$.
Since $\|u-a\|<(\gamma-\sigma)\|b-a\|\le\tau\|b-a\|\le d_B(a)$, we see that $u\notin B$; in particular, $a\notin B$ and $u\ne b$.
Let $z$ denote the projection of $\frac{u-b}{\|u-b\|}$ on $\R(a-b)$.
Then $\|z\|\le 1$ and, employing Lemma~\ref{lem3.2},
\begin{align*}
d\left(\frac{u-b}{\|u-b\|},N_B(b)\right)
&\le \left\|\frac{u-b}{\|u-b\|}-z\right\|+d(z,N_B(b))
\\
&\le \frac{\|u-a\|}{\|b-a\|}+ \sigma
<(\gamma-\sigma)+\sigma=\gamma.
\end{align*}
Since $\|u-\bx\|\le\|u-a\|+\|a-\bx\|<2(\gamma-\sigma)\rho'+\rho'=\rho$ and $\|b-\bx\|<\rho'<\rho$, we get from \eqref{int_con} that
$d\left(\frac{b-u}{\|u-b\|},N_A(u)\right)>\gamma$.
It follows from Proposition~\ref{dis_dec} that $d(b,A)\le\|a-b\|-\gamma\delta=(1-\gamma^2+\gamma\sigma)\|a-b\|$.
Hence,
\begin{align}\label{in}
\|a'-b\|\le\tau\iv d(b,A)\le c\|a-b\| \quad\mbox{for all}\quad a'\in P_A^{\tau,\sigma}(b).
\end{align}

Now we show that any sequence $\{x_n\}$ of $(\tau,\sigma)$-alternating projections for $\{A,B\}$ remains in $\B_{\rho'}(\bx)$ whenever $x_0\in\B_{\rho''}(\bx)$ where $\rho'':=\left(\frac{\tau\iv}{1-c}+1\right)\iv\rho'<\rho'$.
Indeed,
\begin{align*}
\|x_1-x_0\|\le
\tau\iv d(x_0,B)\le\tau\iv\|x_0-\bx\|.
\end{align*}
Let $n\in\N$ and $x_k\in\B_{\rho'}(\bx)$, $k=0,1,\ldots n$.
It follows from \eqref{in} that
\begin{align}\label{in2}
\|x_{k+1}-x_k\|\le c^k\|x_1-x_0\|\quad (k=0,1,\ldots n),
\end{align}
and consequently,
\begin{align*}
\|x_{n+1}-x_0\|&\le\sum_{k=0}^{n} c^k\|x_1-x_0\|\le\frac{1}{1-c}\|x_1-x_0\|,
\\
\|x_{n+1}-\bx\|&\le\left(\frac{\tau\iv}{1-c}+1\right)\|x_0-\bx\|<\rho'.
\end{align*}
Thanks to \eqref{in2}, $\{x_k\}$ is a Cauchy sequence containing two subsequences belonging to closed subsets $A$ and $B$, respectively.
Hence, it converges to a point in $A\cap B$ with $R-$linear rate $c$.
\end{proof}
\begin{remark}\label{R31}
1. When inexact alternating projections are close to being exact, i.e., $\tau$ and $\sigma$ are close to 1 and 0, respectively (cf. definition \eqref{ab}), then the assumptions of Theorem~\ref{new_mai_res} are easily satisfied (as long as $\hat{\theta}_{4}[A,B](\bx)>0$) while the convergence rate $c=\tau\iv(1-\gamma^2+\gamma\sigma)$ is mostly determined by the term $1-\gamma^2$.
Recall that $\gamma$ can be any number in $(0,\hat{\theta}_{4}[A,B](\bx))$.
Thanks to Proposition~\ref{P19}, $1-\gamma^2=(\gamma')^2$ where $\gamma'$ can be any number in $(\hat{c}_{4}[A,B](\bx),1)$.

2. When $\dim X<\infty$, the special case $\tau=1$ and $\sigma=0$ of Theorem~\ref{new_mai_res} recaptures \cite[Theorem~2.3]{DruIofLew14}.
The proof given above follows that of \cite[Theorem~2.3]{DruIofLew14}.

3. It can be of interest to consider a more advanced version of inexact alternating projections than the one given in Definition~\ref{ine_alt_pro}:
\begin{gather*}
x_{2n+1}\in P_B^{\tau_1,\sigma_1}(x_{2n})
\quad\mbox{and}\quad
x_{2n+2}\in P_A^{\tau_2,\sigma_2}(x_{2n+1})
\quad
(n=0,1,\ldots),
\end{gather*}
where $\tau_1,\tau_2\in (0,1]$ and $\sigma_1,\sigma_2\in [0,1)$.
For instance, the projections on one of the sets, say, $A$ can be required to be exact, i.e., $\tau_2=1$ and $\sigma_2=0$.
Theorem~\ref{new_mai_res} remains applicable to this situation with $\tau:=\min\{\tau_1,\tau_2\}$ and $\sigma:=\max\{\sigma_1,\sigma_2\}$.
It is possible to obtain a sharper convergence estimate taking into account different ``inexactness'' parameters for each of the sets.
For that, one needs to amend the definition of alternating projections by considering the selection of the pair $\{x_{2n+1},x_{2n+2}\}$ as a single two-part iteration.
\end{remark}

\subsection{Convergence for inexact alternating projections under uniform regularity}

The motivation for the discussed below version of inexact projections comes from \cite[Section~6]{LewLukMal09}.

Given a point $x$ and a set $A$ in a Hilbert space and a number $\sigma\in[0,1)$, the \emph{$\sigma$-projection} of $x$ on $A$ is defined as follows:
\begin{gather}\label{bc}
P_{A}^\sigma(x):= \left\{a\in A \mid d\left(x-a,N_A(a)\right)\le \sigma\|x-a\|\right\}.
\end{gather}
Observe that
\begin{gather*}
P_{A}^0(x)= \left\{a\in A \mid x-a\in N_A(a)\right\}\supset P_{A}(x)
\end{gather*}
and the inclusion can be strict even in finite dimensions.
Furthermore, for any $\sigma\in[0,1)$, $P_{A}^\sigma(x)$ can contain points lying arbitrarily far from $x$.

\begin{definition}[Inexact alternating projections]\label{D32}
Given a number $\sigma\in[0,1)$,
$\{x_n\}$ is a sequence of $\sigma$-alternating projections for $\{A,B\}$ if
\begin{gather}\notag
x_{2n+1}\in P_B^{\sigma}(x_{2n})
\quad\mbox{and}\quad
x_{2n+2}\in P_A^{\sigma}(x_{2n+1}),
\\\label{D32-1}
\|x_{n+2}-x_{n+1}\|\le\|x_{n+1}-x_{n}\|
\quad
(n=0,1,\ldots).
\end{gather}
\end{definition}

The role of the monotonicity condition \eqref{D32-1} in Definition~\ref{D32} is to compensate for the lack of good projection properties of the $\sigma$-projection operator \eqref{bc}.
In the case of standard alternating projections (cf. Definition~\ref{AP}), this condition is satisfied automatically.

\begin{theorem}[Convergence of inexact alternating projections under uniform regularity]\label{T31}
Suppose that $\{A,B\}$ is uniformly regular at $\bx$, $A$ is super-regular at $\bx$ and $\sigma\in[0,1)$ satisfies
$$c_0:=\hat{c}[A,B](\bx)(1-\sigma^2)+\sigma^2+2\sigma\sqrt{1-\sigma^2}+\sigma<1.$$
Then, for any $c\in(c_0,1)$,
any sequence $\{x_k\}$ of $\sigma$-alternating projections for $\{A,B\}$ with initial points $x_0$ and $x_1$ sufficiently close to $\bx$ converges to a point in $A\cap B$ with $R-$linear rate $\sqrt{c}$.
\end{theorem}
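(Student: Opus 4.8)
The plan is to follow the scheme of the proof of Theorem~\ref{new_mai_res}, adapting it to the uniform regularity setting where we must now work with \emph{both} an angle bound on normals (property~\eqref{P3-7} via $\hat{c}[A,B](\bx)$) and the super-regularity of $A$. The core of the argument is a one-step estimate: for $a\in A$ close to $\bx$, $b\in P_B^\sigma(a)$ close to $\bx$, and $a'\in P_A^\sigma(b)$ close to $\bx$, I want to show $\|a'-b\|\le\sqrt{c_0}\,\|b-a\|$ (or rather bound $\|a'-b\|^2$ by $c_0\|b-a\|^2$, with $c_0$ as in the statement). The key geometric identity will be the decomposition, for the pair of successive $\sigma$-projections, of the relevant inner products: $x-a$ being an ``almost normal'' to $A$ at $a$ means $\dist(x-a,N_A(a))\le\sigma\|x-a\|$, so I can write $x-a$ as a vector in $N_A(a)$ plus an error of norm $\le\sigma\|x-a\|$. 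First I would fix $\rho,\delta>0$ from \eqref{P3-7} and from Definition~\ref{sup_reg}(ii) (with a small $\gamma$ chosen so that $c_0$ computed with $\hat c$ works), and restrict all iterates to a small ball around $\bx$.

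The heart of the one-step estimate: write $b-a = u + e$ with $u\in N_B(b)$, $\|e\|\le\sigma\|b-a\|$ (from $b\in P_B^\sigma(a)$), and $b-a' = w + e'$ with $w\in N_A(a')$, $\|e'\|\le\sigma\|b-a'\|$ (from $a'\in P_A^\sigma(b)$). Then
\begin{align*}
\|a'-b\|^2 &= \langle a'-b,\,a'-b\rangle = \langle a'-b,\,(a'-a)+(a-b)\rangle.
\end{align*}
The term $\langle a'-b,a'-a\rangle$ I would handle using super-regularity of $A$: since $a'\in A$, $a\in A$ are both near $\bx$ and $b-a'$ is almost-normal to $A$ at $a'$, the inner product $\langle b-a',a-a'\rangle$ is bounded by $(\gamma+\sigma)\|b-a'\|\,\|a-a'\|$ (splitting off the error $e'$ and applying \eqref{l2-1} to the $N_A(a')$-component). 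The term $\langle a'-b,a-b\rangle$ is where the angle condition enters: $a-b = -(u+e)$ with $u\in N_B(b)$, and $a'-b = -(w+e')$ with $w\in N_A(a')$, so $\langle a'-b,a-b\rangle = \langle w+e',u+e\rangle = \langle w,u\rangle + (\text{cross terms})$; the main term $\langle w,u\rangle$ is $\le \hat c[A,B](\bx)\,\|w\|\,\|u\|$ by \eqref{P3-7}, and $\|u\|\le(1+\sigma)\|b-a\|$, $\|w\|\le(1+\sigma)\|b-a'\|$, while the cross terms are controlled by $\sigma$. Collecting everything, using Cauchy--Schwarz where needed and the normalization $\|a'-a\|\le\|a'-b\|+\|b-a\|$, I obtain a quadratic inequality in the ratio $t:=\|a'-b\|/\|b-a\|$ of the form $t^2 \le (\text{stuff involving }\hat c,\sigma)\cdot t + (\text{stuff})$, from which $t^2\le c_0$ after solving; the bookkeeping is designed precisely so that the constant matches $\hat c(1-\sigma^2)+\sigma^2+2\sigma\sqrt{1-\sigma^2}+\sigma$.

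Once the one-step contraction $\|x_{n+2}-x_{n+1}\|^2\le c_0\|x_{n+1}-x_n\|^2$ is established on a neighborhood, the rest is routine and identical in spirit to the end of the proof of Theorem~\ref{new_mai_res}: the monotonicity condition \eqref{D32-1} together with the contraction gives, by induction, $\|x_{k+1}-x_k\|\le(\sqrt{c})^{k}\|x_1-x_0\|$ for $k$ large, hence $\{x_k\}$ is Cauchy and stays in the small ball provided $x_0,x_1$ are close enough to $\bx$; its limit lies in $A\cap B$ since alternate subsequences lie in the closed sets $A$ and $B$, and the rate is $R$-linear with rate $\sqrt c$. The main obstacle I anticipate is the one-step estimate itself — specifically, getting the error terms from the two ``almost-normality'' slacks and the super-regularity slack $\gamma$ to combine into exactly the stated constant $c_0$ rather than something looser; this requires choosing which inequalities to apply in the $\langle a'-b, a-b\rangle$ expansion carefully (in particular whether to bound $\|u\|$, $\|w\|$ in terms of $\|b-a\|$, $\|b-a'\|$ before or after invoking the angle bound), and the monotonicity condition \eqref{D32-1} must be invoked to guarantee $\|b-a'\|\le\|b-a\|$ at the right moment so that the quadratic inequality actually yields a bound below $1$.
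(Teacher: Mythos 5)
Your plan follows the same route as the paper's proof: the identity $\|a_2-b\|^2=\langle a_2-b,a_2-a_1\rangle+\langle a_2-b,a_1-b\rangle$, with the first summand controlled by super-regularity \eqref{l2-1}, the second expanded into $-\langle u,v\rangle$ plus cross terms after writing $b-a_2$ and $a_1-b$ as a normal plus an error of relative size $\sigma$, and then the usual induction keeping the iterates in a ball (where \eqref{D32-1} is indeed needed) and the Cauchy-sequence conclusion. The gap is precisely the one you flag as your ``main obstacle'', and it is a missing idea rather than bookkeeping. From $d(x-a,N_A(a))\le\sigma\|x-a\|$ alone you only get normals $u$ with $(1-\sigma)\|x-a\|\le\|u\|\le(1+\sigma)\|x-a\|$, so with your bounds the leading term becomes $\hat{c}[A,B](\bx)(1+\sigma)^2\|b-a_2\|\,\|a_1-b\|$ and the cross terms become $(2\sigma+3\sigma^2)\|b-a_2\|\,\|a_1-b\|$. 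Whenever $\hat{c}[A,B](\bx)>0$ and $\sigma>0$ this strictly exceeds $c_0$ and may exceed $1$ even when $c_0<1$, so the theorem with the stated constant is not obtained.

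The ingredient you are missing is the normalization step \eqref{er7}--\eqref{er9} of the paper's proof: given $u\in N_A(a_2)$ with $\|b-a_2-u\|\le(\sigma+\varepsilon)\|b-a_2\|$, one takes $u_1$, the orthogonal projection of $b-a_2$ onto $\R u$, and shrinks it to $u_2=tu_1$ with $t\in(0,1]$ chosen so that $\|b-a_2-u_2\|$ equals exactly $(\sigma+\varepsilon)\|b-a_2\|$. Since $N_A(a_2)$ is a cone, $u_2$ is still an admissible almost-normal satisfying \eqref{er3}, and the two Pythagorean identities \eqref{er7} and \eqref{er8} yield the sharper bound $\|u_2\|\le\sqrt{1-(\sigma+\varepsilon)^2}\,\|b-a_2\|$ of \eqref{er4}; applied to both normals, these bounds are exactly what produce the coefficients $\hat{c}(1-\sigma^2)$, $2\sigma\sqrt{1-\sigma^2}$ and $\sigma^2$ in $c_0$. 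A secondary discrepancy: the super-regularity term naturally carries the factor $\|a_1-a_2\|$, and your proposal to expand $\|a_1-a_2\|\le\|a_1-b\|+\|b-a_2\|$ and solve a quadratic in $t=\|a_2-b\|/\|a_1-b\|$ inflates the constant further, roughly by $(1-\gamma-\sigma)^{-1}$; the paper instead keeps this term as $(\gamma\sqrt{1-(\sigma+\varepsilon)^2}+\sigma+\varepsilon)\|b-a_2\|\,\|a_1-a_2\|$, which is responsible for the trailing $\gamma\sqrt{1-\sigma^2}+\sigma$ in the pre-limit constant \eqref{er0} and leaves room under $c$ because $\gamma$ can be taken arbitrarily small. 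As written, your plan would prove linear convergence with some rate, but not with rate $\sqrt{c}$ for every $c\in(c_0,1)$.
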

\begin{proof}
Let $c\in(c_0,1)$ and choose a $c_1>\hat{c}[A,B](\bx)$ and a $\gamma>0$ such that \begin{align}\label{er0}
c_1(1-\sigma^2)+\sigma^2+(2\sigma+\gamma)\sqrt{1-\sigma^2}+\sigma<c.
\end{align}
By Proposition~\ref{P3}(iii) and Definition~\ref{sup_reg}(ii), there exists a $\delta>0$ such that
\begin{gather}\label{er1}
-\langle u,v\rangle\le c_1\|u\|\,\|v\|,
\\\label{er2}
\langle u,x-a\rangle\le \gamma\|u\|\,\|x-a\|
\end{gather}
for all $x,a\in A\cap \B_{\de}(\bx)$, $b\in B\cap \B_{\de}(\bx)$, $u\in N_{A}(a)$ and $v\in N_{B}(b)$.

Let $a_1\in A\cap\B_{\delta}(\bx)$, $b\in P_B^{\sigma}(a_1)\cap\B_{\delta}(\bx)$ and $a_2\in P_A^{\sigma}(b)\cap\B_{\delta}(\bx)$.
We are going to show that
\begin{align}\label{er5}
\|a_2-b\|\le c\|b-a_1\|.
\end{align}

By definition \eqref{bc}, for any $\varepsilon\in(0,1-\sigma)$, there exist $u\in N_{A}(a_2)$ and $v\in N_{B}(b)$ such that
\begin{gather}\label{er3}
\|b-a_2-u\|\le(\sigma+\varepsilon)\|b-a_2\|
\quad\mbox{and}\quad
\|a_1-b-v\|\le(\sigma+\varepsilon)\|a_1-b\|.
\end{gather}
Additionally, one can ensure that
\begin{gather}\label{er4}
\|u\|\le\sqrt{1-(\sigma+\varepsilon)^2}\|b-a_2\|
\quad\mbox{and}\quad
\|v\|\le\sqrt{1-(\sigma+\varepsilon)^2}\|a_1-b\|.
\end{gather}
Indeed, take any $u\in N_{A}(a_2)$ satisfying the first inequality in \eqref{er3}.
If $u=0$, the first inequality in \eqref{er4} is satisfied too.
Suppose $u\ne0$ and consider $u_1:=\ang{b-a_2,u}\frac{u}{\|u\|^2}$ -- the projection of $b-a_2$ on $\R u$.
Then
\begin{gather}\label{er7}
\|b-a_2\|^2=\|u_1\|^2+\|b-a_2-u_1\|^2,
\\\label{er10}
\|b-a_2-u_1\|\le\|b-a_2-u\|\le(\sigma+\varepsilon)\|b-a_2\|
\end{gather}
and there exists a $t\in(0,1]$ such that $u_2:=tu_1$ satisfies
\begin{gather}\label{er9}
\|b-a_2-u_2\|=(\sigma+\varepsilon)\|b-a_2\|
\end{gather}
(thanks to the continuity of the function $t\mapsto\|b-a_2-tu_1\|$).
Hence, $u_2\in N_{A}(a_2)$, vector $u_1-u_2$ is a projection of $b-a_2-u_2$ on $\R u$, i.e.,
\begin{gather}\label{er8}
\|b-a_2-u_2\|^2=\|u_1-u_2\|^2+\|b-a_2-u_1\|^2,
\end{gather}
and, using \eqref{er7}, \eqref{er8}, \eqref{er9}, and \eqref{er10},
\begin{align*}
\|u_2\|&=\|u_1\|-\|u_1-u_2\|
\\&
=\sqrt{\|b-a_2\|^2-\|b-a_2-u_1\|^2}-\sqrt{(\sigma+\varepsilon)^2\|b-a_2\|^2-\|b-a_2-u_1\|^2}
\\&
=\frac{(1-(\sigma+\varepsilon)^2)\|b-a_2\|^2}{\sqrt{\|b-a_2\|^2-\|b-a_2-u_1\|^2}+\sqrt{(\sigma+\varepsilon)^2\|b-a_2\|^2-\|b-a_2-u_1\|^2}}
\\&
\le\frac{(1-(\sigma+\varepsilon)^2)\|b-a_2\|^2}{\sqrt{1-(\sigma+\varepsilon)^2}\|b-a_2\|}=\sqrt{1-(\sigma+\varepsilon)^2}\|b-a_2\|.
\end{align*}
Similarly, given any $v\in N_{B}(b)$ satisfying the second inequality in \eqref{er3}, one can find a $v_2\in N_{B}(b)$ satisfying this inequality and, additionally, the second inequality in \eqref{er4}.

Making use of \eqref{er1}, \eqref{er3} and \eqref{er4}, we get
\begin{align*}
-\ang{b-a_2,a_1-b}&=-\ang{u,v}-\ang{u,a_1-b-v}
\\&
-\ang{b-a_2-u,v}-\ang{b-a_2-u,a_1-b-v}
\\&\le
c_1\|u\|\,\|v\|+\|u\|\,\|a_1-b-v\|
\\&
+\|b-a_2-u\|\,\|v\|+\|b-a_2-u\|\,\|a_1-b-v\|
\\&\le
\Bigl(c_1(1-(\sigma+\varepsilon)^2)+2(\sigma+\varepsilon)(\sqrt{1-(\sigma+\varepsilon)^2})
\\&
+(\sigma+\varepsilon)^2\Bigr)\|b-a_2\|\,\|a_1-b\|.
\end{align*}
At the same time, making use of \eqref{er2} and the first inequalities in \eqref{er3} and \eqref{er4}, we have
\begin{align*}
\langle b-a_2,a_1-a_2\rangle&=\langle u,a_1-a_2\rangle+\langle b-a_2-u,a_1-a_2\rangle
\\&\le
\gamma\|u\|\,\|a_1-a_2\|+(\sigma+\varepsilon)\|b-a_2\|\,\|a_1-a_2\|
\\&\le
(\gamma\sqrt{1-(\sigma+\varepsilon)^2}+(\sigma+\varepsilon))\|b-a_2\|\,\|a_1-a_2\|,
\end{align*}
Adding the last two estimates
and passing to limit as $\varepsilon\downarrow0$, we obtain
\begin{align*}
\|b-a_2\|^2\le
\Bigl(c_1(1-\sigma^2)+\sigma^2+(2\sigma+\gamma)\sqrt{1-\sigma^2}+\sigma\Bigr)\|b-a_2\|\,\|a_1-b\|.
\end{align*}
Thanks to \eqref{er0}, this proves \eqref{er5}.

Now we show that a sequence $\{x_n\}$ of $\sigma$-alternating projections for $\{A,B\}$ remains in $\B_{\delta}(\bx)$ if $x_0,x_1\in\B_{\rho}(\bx)$ where $\rho:=
\frac{1-c}{5-c}\delta<\delta$.
Let $n\in\N$ and $x_k\in\B_{\delta}(\bx)$, $k=0,1,\ldots,2n$.
It follows from \eqref{er5} that
\begin{align}\label{er6}
\|x_{2k}-x_{2k-1}\|
\le c^{k}\|x_{1}-x_{0}\|\quad (k=1,2,\ldots, n),
\end{align}
and consequently, employing also \eqref{D32-1},
\begin{align*}
\|x_{2n+2}-x_{0}\|&\le\|x_{2n+2}-x_{2n+1}\|+\|x_{2n+1}-x_{0}\|
\le\|x_{2n+2}-x_{2n+1}\|
\\&
+\sum_{k=1}^{n}\left(\|x_{2k+1}-x_{2k}\|+\|x_{2k}-x_{2k-1}\|\right)+\|x_{1}-x_{0}\|
\\&\le2\sum_{k=1}^{n}\|x_{2k}-x_{2k-1}\|+2\|x_{1}-x_{0}\|
\\&
\le 2\sum_{k=0}^{n}c^k\|x_{1}-x_{0}\|\le\frac{2}{1-c}\|x_1-x_0\|.
\end{align*}
Thus,
\begin{align*}
\max\{\|x_{2n+2}-\bx\|&,\|x_{2n+1}-\bx\|\}
\le\max\{\|x_{2n+2}-x_0\|,\|x_{2n+1}-x_0\|\}
\\
&+\|x_0-\bx\|
\le\frac{2}{1-c}\|x_1-x_0\|+\|x_0-\bx\|
\\
&\le\frac{2}{1-c}\|x_1-\bx\|+\frac{3-c}{1-c}\|x_0-\bx\|<\frac{5-c}{1-c}\rho=\delta,
\end{align*}
i.e., $x_{2n+1},x_{2n+2}\in\B_{\delta}(\bx)$.

Thanks to \eqref{er6}, $\{x_k\}$ is a Cauchy sequence containing two subsequences belonging to closed subsets $A$ and $B$, respectively.
Hence, it converges to a point in $A\cap B$ with $R-$linear rate $\sqrt{c}$.
\end{proof}
\begin{remark}
1. When the ``inexactness'' parameter $\sigma$ is small (cf. definition \eqref{D32}), then the assumptions of Theorem~\ref{T31} are easily satisfied (as long as ${\hat{c}[A,B](\bx)<1}$ and condition \eqref{D32-1} holds) while the convergence rate is close to the one guaranteed by Theorem~\ref{LLM} and \cite[Theorem~6.1]{LewLukMal09}.

2. One can also consider a more advanced version of inexact alternating projections than the one given in Definition~\ref{D32}:
\begin{gather*}
x_{2n+1}\in P_B^{\sigma_1}(x_{2n})
\quad\mbox{and}\quad
x_{2n+2}\in P_A^{\sigma_2}(x_{2n+1}),
\quad
(n=0,1,\ldots).
\end{gather*}
where $\sigma_1,\sigma_2\in [0,1)$.
Theorem~\ref{T31} remains applicable to this situation with $\sigma:=\max\{\sigma_1,\sigma_2\}$ (cf. Remark~\ref{R31}.3).

3. Observe that, thanks to \eqref{er5}, for odd values of $n$, condition \eqref{D32-1} is improved in the proof of Theorem~\ref{T31}:
\begin{gather*}
\|x_{n+2}-x_{n+1}\|\le c\|x_{n+1}-x_{n}\|,
\end{gather*}
where $c<1$.
However, the assumption is still needed to ensure that $x_{n+2}$ is not too far from $\bx$ and uniform and super-regularity conditions are applicable..

4. Constant $c_1$ in \eqref{er1} is an upper estimate of the cosine of the angle $\varphi$ between vectors $u$ and $-v$ while $\sigma+\varepsilon$ in \eqref{er3} can be interpreted as an upper estimate of the sine of the angles $\psi_1$ and $\psi_2$ between vectors $b-a_2$ and $u$ and $a_1-b$ and $v$, respectively.
One can use standard trigonometric identities and inequalities \eqref{er1} and \eqref{er3} to obtain an upper estimate of the cosine of the angle $\varphi-\psi_1-\psi_2$ between vectors $b-a_2$ and $b-a_1$ and possibly improve the convergence estimate in the statement of Theorem~\ref{T31}.

5. If both subsets $A$ and $B$ are super-regular, then
in the proof of Theorem~\ref{T31}, one can establish an analogue of \eqref{er5} with subsets $A$ and $B$ interchanged:
\begin{align*}
\|b_2-a\|\le c\|a-b_1\|,
\end{align*}
where $b_1\in B\cap\B_{\delta}(\bx)$, $a\in P_A^{\sigma}(b_1)\cap\B_{\delta}(\bx)$ and $b_2\in P_B^{\sigma}(a)\cap\B_{\delta}(\bx)$.
This guarantees an improvement with rate $c$ on each iteration.
As a result, one obtains a better overall  $R-$linear rate $c$.

6. The conclusion of Theorem~\ref{T31} remains true if one replaces the assumptions of uniform regularity of $\{A,B\}$ (and the regularity constant $\hat{c}[A,B](\bx)$) and super-regularity of $A$ with BLPW-restricted regularity (and the regularity constant $\hat{c}_1[A,B](\bx)$) and $B$-super-regularity, respectively, accompanied by appropriate adjustments in the definition of $\sigma$-projections.
\end{remark}
\section*{Acknowledgements}

The authors wish to express their gratitude to the anonymous referee for the careful reading of the manuscript and helpful comments and suggestions.

\bibliographystyle{acm}
\bibliography{BUCH-Kr,Kruger,KR-tmp}

\def\cprime{$'$} \def\cftil#1{\ifmmode\setbox7\hbox{$\accent"5E#1$}\else
  \setbox7\hbox{\accent"5E#1}\penalty 10000\relax\fi\raise 1\ht7
  \hbox{\lower1.15ex\hbox to 1\wd7{\hss\accent"7E\hss}}\penalty 10000
  \hskip-1\wd7\penalty 10000\box7} \def\cprime{$'$} \def\cprime{$'$}
  \def\cprime{$'$} \def\Dbar{\leavevmode\lower.6ex\hbox to 0pt{\hskip-.23ex
  \accent"16\hss}D} \def\cfac#1{\ifmmode\setbox7\hbox{$\accent"5E#1$}\else
  \setbox7\hbox{\accent"5E#1}\penalty 10000\relax\fi\raise 1\ht7
  \hbox{\lower1.15ex\hbox to 1\wd7{\hss\accent"13\hss}}\penalty 10000
  \hskip-1\wd7\penalty 10000\box7} \def\cprime{$'$}
\begin{thebibliography}{10}

\bibitem{AttBolRedSou10}
{\sc Attouch, H., Bolte, J., Redont, P., and Soubeyran, A.}
\newblock Proximal alternating minimization and projection methods for
  nonconvex problems: an approach based on the {Kurdyka--\L ojasiewicz}
  inequality.
\newblock {\em Math. Oper. Res. 35}, 2 (2010), 438--457.

\bibitem{AusDanThi05}
{\sc Aussel, D., Daniilidis, A., and Thibault, L.}
\newblock Subsmooth sets: functional characterizations and related concepts.
\newblock {\em Trans. Amer. Math. Soc. 357}, 4 (2005), 1275--1301.

\bibitem{BauBor93}
{\sc Bauschke, H.~H., and Borwein, J.~M.}
\newblock On the convergence of von {N}eumann's alternating projection
  algorithm for two sets.
\newblock {\em Set-Valued Anal. 1}, 2 (1993), 185--212.

\bibitem{BauLukPhaWan13.1}
{\sc Bauschke, H.~H., Luke, D.~R., Phan, H.~M., and Wang, X.}
\newblock Restricted normal cones and the method of alternating projections:
  applications.
\newblock {\em Set-Valued Var. Anal. 21}, 3 (2013), 475--501.

\bibitem{BauLukPhaWan13.2}
{\sc Bauschke, H.~H., Luke, D.~R., Phan, H.~M., and Wang, X.}
\newblock Restricted normal cones and the method of alternating projections:
  theory.
\newblock {\em Set-Valued Var. Anal. 21}, 3 (2013), 431--473.

\bibitem{Bre65}
{\sc Bregman, L.~M.}
\newblock {The method of successive projection for finding a common point of
  convex sets.}
\newblock {\em {Sov. Math., Dokl.} 6\/} (1965), 688--692.

\bibitem{ClaSteWol95}
{\sc Clarke, F.~H., Stern, R.~J., and Wolenski, P.~R.}
\newblock Proximal smoothness and the lower-${C}^2$ property.
\newblock {\em J. Conv. Anal. 2\/} (1995), 97--116.

\bibitem{DMT}
{\sc De~Giorgi, E., Marino, A., and Tosques, M.}
\newblock Problems of evolution in metric spaces and maximal decreasing curve.
\newblock {\em Atti Accad. Naz. Lincei Rend. Cl. Sci. Fis. Mat. Natur. (8) 68},
  3 (1980), 180--187.
\newblock In Italian.

\bibitem{DonRoc09}
{\sc Dontchev, A.~L., and Rockafellar, R.~T.}
\newblock {\em Implicit Functions and Solution Mappings. A View from
  Variational Analysis}.
\newblock Springer Monographs in Mathematics. Springer, Dordrecht, 2009.

\bibitem{DruIofLew14}
{\sc Drusvyatskiy, D., Ioffe, A.~D., and Lewis, A.~S.}
\newblock Alternating projections and coupling slope.
\newblock {\em arXiv: 1401.7569\/} (2014), 1--17.

\bibitem{FabHenKruOut10}
{\sc Fabian, M.~J., Henrion, R., Kruger, A.~Y., and Outrata, J.~V.}
\newblock Error bounds: necessary and sufficient conditions.
\newblock {\em Set-Valued Var. Anal. 18}, 2 (2010), 121--149.

\bibitem{Iof00_}
{\sc Ioffe, A.~D.}
\newblock Metric regularity and subdifferential calculus.
\newblock {\em Russian Math. Surveys 55\/} (2000), 501--558.

\bibitem{Kru05.1}
{\sc Kruger, A.~Y.}
\newblock Stationarity and regularity of set systems.
\newblock {\em Pac. J. Optim. 1}, 1 (2005), 101--126.

\bibitem{Kru06.1}
{\sc Kruger, A.~Y.}
\newblock About regularity of collections of sets.
\newblock {\em Set-Valued Anal. 14}, 2 (2006), 187--206.

\bibitem{Kru09.1}
{\sc Kruger, A.~Y.}
\newblock About stationarity and regularity in variational analysis.
\newblock {\em Taiwanese J. Math. 13}, 6A (2009), 1737--1785.

\bibitem{KruLop12.1}
{\sc Kruger, A.~Y., and L\'opez, M.~A.}
\newblock Stationarity and regularity of infinite collections of sets.
\newblock {\em J. Optim. Theory Appl. 154}, 2 (2012), 339--369.

\bibitem{KruTha13.1}
{\sc Kruger, A.~Y., and Thao, N.~H.}
\newblock About uniform regularity of collections of sets.
\newblock {\em Serdica Math. J. 39\/} (2013), 287--312.

\bibitem{KruTha15}
{\sc Kruger, A.~Y., and Thao, N.~H.}
\newblock Quantitative characterizations of regularity properties of
  collections of sets.
\newblock {\em J. Optim. Theory Appl. 164}, 1 (2015), 41--67.

\bibitem{LewLukMal09}
{\sc Lewis, A.~S., Luke, D.~R., and Malick, J.}
\newblock Local linear convergence for alternating and averaged nonconvex
  projections.
\newblock {\em Found. Comput. Math. 9}, 4 (2009), 485--513.

\bibitem{LewMal08}
{\sc Lewis, A.~S., and Malick, J.}
\newblock Alternating projection on manifolds.
\newblock {\em Math. Oper. Res. 33}, 1 (2008), 216--234.

\bibitem{Mor06.1}
{\sc Mordukhovich, B.~S.}
\newblock {\em Variational Analysis and Generalized Differentiation. {I}:
  {B}asic {T}heory}, vol.~330 of {\em Grundlehren der Mathematischen
  Wissenschaften [Fundamental Principles of Mathematical Sciences]}.
\newblock Springer, Berlin, 2006.

\bibitem{NolRon}
{\sc Noll, D., and Rondepierre, A.}
\newblock On local convergence of the method of alternating projections.
\newblock {\em arXiv: 1312.5681v2\/} (2014), 1--24.

\bibitem{PolRocThi00}
{\sc Poliquin, R.~A., Rockafellar, R.~T., and Thibault, L.}
\newblock Local differentiability of distance functions.
\newblock {\em Trans. Amer. Math. Soc. 352}, 11 (2000), 5231--5249.

\bibitem{RocWet98}
{\sc Rockafellar, R.~T., and Wets, R. J.-B.}
\newblock {\em Variational Analysis}.
\newblock Springer, Berlin, 1998.

\bibitem{Sha94}
{\sc Shapiro, A.}
\newblock Existence and differentiability of metric projections in {H}ilbert
  spaces.
\newblock {\em SIAM J. Optim. 4}, 1 (1994), 130--141.

\end{thebibliography}
\end{document}